\newcommand{\R}{\mathbb{R}}
\newcommand{\Z}{\mathbb{Z}}
\newcommand{\WR}{\widehat{\mathcal{W}}_R}
\newcommand{\px}{\partial_x}
\newcommand{\py}{\partial_y}
\newcommand{\pt}{\partial_t}
\newcommand{\dx}{\Delta x}
\newcommand{\dt}{\Delta t}
\newcommand{\ww}{\widetilde{w}}
\newcommand{\Fr}{\text{Fr}}
\newcommand{\be}{\begin{equation}}
\newcommand{\ee}{\end{equation}}
\crefname{hypothesis}{Hypothesis}{Hypotheses}
\title{A fully well-balanced scheme for shallow water equations with Coriolis force}
\author{Vivien Desveaux\and Alice Masset}
\begin{document}

\maketitle

% REQUIRED
\begin{abstract}
	The present work is devoted to the derivation of a fully well-balanced and positive-preserving numerical scheme for the shallow water equations with Coriolis force. The first main issue consists in preserving all the steady states, including the geostrophic equilibrium. Our strategy relies on a Godunov-type scheme with suitable source term and steady state discretisations. The second challenge lies in improving the order of the scheme while preserving the fully well-balanced property. A modification of the classical methods is required since no conservative reconstruction can preserve all the steady states in the case of rotating shallow water equations. A steady state detector is used to overcome this matter. Some numerical experiments are presented to show the relevance and the accuracy of both first-order and second-order schemes.
\end{abstract}

% REQUIRED
\begin{keywords}
  shallow-water equations, Coriolis force, fully well-balanced schemes, Godunov-type schemes, high-order approximation.
\end{keywords}

% REQUIRED
\begin{AMS} 65M08, 65M12
%   68Q25, 68R10, 68U05
\end{AMS}

% The introduction introduces the context and summarizes the
% manuscript. It is importantly to clearly state the contributions of
% this piece of work. The next two paragraphs are text filler,
% generated by the \texttt{lipsum} package.

% The outline is not required, but we show an example here.
%The paper is organized as follows. Our main results are in
% \cref{sec:main}, our new algorithm is in \cref{sec:alg}, experimental
% results are in \cref{sec:experiments}, and the conclusions follow in
% \cref{sec:conclusions}.

\section{Introduction}\label{sec:intro}

In the present work we consider the one-dimensional shallow water system with transverse velocity and Coriolis force. This system is also known as 1D rotating shallow-water equations (RSW) and is given by
\be \label{eq:RSW1D}
\begin{cases} 
	 \pt h + \px (hu) = 0, \\ 
	 \pt (hu) + \px\left(hu^2 + \frac{gh^2}{2}\right) = fhv - gh\px z,\\ 
	 \pt (hv) + \px (huv) = -fhu,
\end{cases}
\ee
where $h(x,t)$ denotes the fluid height, $u(x,t)$ and $v(x,t)$ are the two components of the horizontal velocity, $z(x)$ designates the topography and is a given function, $g$ is the constant gravitational acceleration and $f$ the Coriolis parameter. 
This system can be written under the more compact form $\px w + \px f(w) =  s(w,z)$ with
$$w =\begin{pmatrix} h \\ hu \\ hv \end{pmatrix}, \quad
f(w) = \begin{pmatrix}
hu \\ hu^2+\frac{gh^2}{2} \\ huv
\end{pmatrix}, 
$$ 
and $s(w,z) = s_{cor}(w) + s_{topo}(w)\px z$ where we have set
$$ 
s_{cor}(w) = \begin{pmatrix}
0 \\ fhv \\ -fhu
\end{pmatrix}
\text{ and }
s_{topo}(w) = \begin{pmatrix}
0 \\ -gh \\ 0
\end{pmatrix}.$$ 
The first source term  is related to the Coriolis force and the second one to the topography. 
The vector $w$ must belong to the convex set of admissible states $$\Omega = \{ w=(h,hu,hv)^T \in \R^3 ;  h>0 \}.$$
This 1D system can be obtained from the two-dimensional RSW equations,
\be \label{eq:RSW2D}
\begin{cases}
	 \pt h + \px (hu) + \py (hv) = 0,\\
	 \pt (hu) + \px\left(hu^2 + \frac{gh^2}{2}\right) + \py(huv) = fhv - gh\px z,\\
	 \pt (hv) + \px (huv) + \py\left(hv^2+\frac{gh^2}{2}\right) = -fhu - gh\py z,
\end{cases}
\ee
in which the variations in the $y$ direction are neglected. 

The RSW system takes into account the force due to the Earth's rotation through the Coriolis term and can therefore model large-scale oceanic or atmospheric fluid flows. One of the remarkable behaviour of geophysical flows is the geostrophic equilibrium, that received a great attention in the literature these last years, see \cite{28BouchutSommerZeitlin2004,36bookZeitlinBouchut2007,19LukacovaNoelleKraft2007,30AudusseKleinNguyen2011,25Lahaye2014,17Audusse2017,20GouzienLahayeZetilinDubos2017,29ChertockDudzinski2018} for instance. Most oceanic and atmospheric circulations are perturbations of the geostrophic equilibrium, which express the balance between the Coriolis force and the horizontal pressure force, as follows in 2D
$$g\nabla (h+z) = f\begin{pmatrix} v \\ -u \end{pmatrix}.$$
In 1D, the geostrophic equilibrium writes 
\be \label{eq:geostrophic steady state}
\begin{cases}
	 u = 0,\\
	 g\px (h+z) = fv,
\end{cases} 
\ee
which is a steady solution of \eqref{eq:RSW1D} with no tangential velocity. Let us notice that in 1D, all the steady solutions of \eqref{eq:RSW1D} with no tangential velocity are described by the geostrophic equilibrium \eqref{eq:geostrophic steady state}. With a zero velocity $v$, we recover the lake at rest solution of the classical shallow-water model.

%Since the pioneer works of \cite{37bermudez1994}, \cite{38greenberg1996}, \cite{39gosse2000}, \cite{40jin2001}, it is well known that the well-balanced property of a numerical scheme, i.e the ability for a scheme to preserves steady states must be satisfied to get accurate numerical results. 
%Many authors among [...] designed well-balanced scheme for the shallow water equations
%Since the pioneer works of  it is well known that the ability for a numerical scheme to preserves steady states must be satisfied to get accurate numerical results.
%Moreover the question of the preservation of all steady states and not only the ones at rest has been raised. Fully well-balanced Godunov-type schemes have been designed for the shallow water system and blood flow equations by ... see [ ] or [].

From a numerical point of view, it is well-known since the pioneer works \cite{37bermudez1994,38greenberg1996,39gosse2000,40jin2001}, that numerical schemes should capture accurately the steady solutions. In the few last decades, a large literature was devoted to design such well-balanced schemes able to preserve steady solution at rest in different contexts. For the classical shallow-water equations, we can mention the hydrostatic reconstruction method proposed in \cite{16Audusse2004HYDRO} and numerous other works using various methods, including \cite{liang2009numerical,fjordholm2011well,fernandez2008consistent}.
% A lot of works are devoted to that question and many authors proposed strategies to construct schemes that preserves steady states at rest. We can mention for instance the  hydrostatic reconstruction method proposed in \cite{16Audusse2004HYDRO} or the relaxation method introduced by\textbf{ .. in []}, which have been largely used on the shallow water equations.
Concerning the RSW system, some authors have developed numerical schemes which preserves exactly the geostrophic equilibrium \eqref{eq:geostrophic steady state}, for instance in \cite{28BouchutSommerZeitlin2004,29ChertockDudzinski2018,19LukacovaNoelleKraft2007,22LiuChertock2019}.

More recently, some numerical schemes able to preserve all the steady states, including the moving ones, were derived. Let us emphasize that it is in general a very challenging task to derive such fully well-balanced schemes. The first attempt was in \cite{43castro2007}, where a scheme that captures all the steady states of the shallow-water equations with topography was presented. However, this scheme was not able to preserve the positivity of the water height. In \cite{bouchut2010subsonic}, the authors obtain a scheme that preserves all the sonic steady states. The first fully well-balanced and positive preserving scheme was derived by Berthon-Chalons \cite{32BerthonChalons2016}. Later, fully well-balanced schemes were also derived for the shallow-water equations with both topography and friction in \cite{47Berthon_Dansac_Manning_Friction} and for the blood flow equations in \cite{46Berthon_Blood_flow}.

% Moreover, fully well-balanced schemes that preserves all steady states and not only the ones at rest, have been developed, see \cite{35NoelleXingShu2007,43castro2007,32BerthonChalons2016} for instance. Authors of \cite{34DansacBerthonClainFoucher2016} and \cite{46Berthon_Blood_flow} propose a fully well-balanced Godunov-type scheme.
For the 1D RSW equations, the steady solutions are described by
\be \label{eq:steadystates}
\begin{cases}
	 \px (hu) = 0, \\
	 \px\left(hu^2 + \frac{gh^2}{2} \right) = fhv - gh\px z, \\
	 (hu) \px v = -fhu.
\end{cases}
\ee
%
% We fix in definition \ref{def:local steady state} the steady state discretisation used all along this work.
Up to our knowledge, no fully well-balanced scheme was proposed for the 1D RSW equations. In this system, there is an additional difficulty due to the complex structure of the steady states. Indeed, let us notice that the  steady solutions with nonzero tangential velocity satisfy
\be \label{eq:moving_steady_states}
\begin{cases}
	 \px (hu) = 0, \\
	 \px\left(\frac{u^2}{2} + g(h+z)\right) = fv, \\
	 \px v = -f.
\end{cases}
\ee
Thus the steady solutions with no tangential velocity described by \eqref{eq:geostrophic steady state} cannot be obtained by setting $u=0$ in \eqref{eq:moving_steady_states}. It leads to two different families of steady states. This is a discrepancy with the standard shallow-water model, where the lake at rest can be obtained by setting $u=0$ in the moving steady states equations.
The first aim of this paper is therefore to derive a fully well-balanced and positive preserving scheme for the one-dimensional RSW equations.

Another issue arises with the 1D RSW equations when we try to increase the order of precision, while preserving the well-balanced property. For other systems with source terms, well-balanced second-order extensions exist. The reader is referred for instance to \cite{15BouchutLIVRE,34DansacBerthonClainFoucher2016} for the shallow-water system with topography, \cite{47Berthon_Dansac_Manning_Friction} for the shallow-water system with both topography and friction and \cite{46Berthon_Blood_flow} for the blood flow equations. In all these extensions, the main ingredient lies in a reconstruction procedure that preserves the discrete steady states. Unfortunately, such a procedure is not possible in the case of the 1D RSW, once again due to the complex structure of the steady solutions. 

In \cite{47Berthon_Dansac_Manning_Friction} and \cite{46Berthon_Blood_flow}, a discrete steady state detection procedure is performed. The purpose is to modify the limitation procedure in order to recover the well-balanced first-order scheme near steady states and keep the high-order scheme far from steady state. We propose to adapt this technique for the 1D RSW equations. However, in order for this method to work in this context, we must complement this technique with some new manipulations of the space steps.

The paper is organized as follows. In \cref{sec:GTS}, we start by recalling some general notions about Godunov-type schemes and we choose the discretisation of the continuous steady solutions the scheme will have to preserve. Next, \cref{sec:approximate_Riemann_solver} is devoted to the derivation of an approximate Riemann solver that lead to a fully well-balanced and positive preserving scheme, as stated in \cref{thm:first-order_scheme}. In \cref{sec:secondorder}, we recall the principle of the classical second-order MUSCL extension and we explain why it cannot give a fully well-balanced scheme for the RSW system. Therefore, we present a new strategy based on a discrete steady state detection to recover this property. We also check this modification does not create non-positive fluid height values. In \cref{sec:numerical_results}, we show some numerical examples that illustrates the fully well-balanced property and the accuracy of both first-order and second-order schemes. Finally, we give some concluding remarks in \cref{sec:conclusions}.

All along this paper, for any quantity $X$ which has a left value $X_L$ and a right value $X_R$, we will use the following notations
$$[X]=X_R-X_L,\qquad \overline{X}=\frac{X_L+X_R}{2}.$$

\section{Godunov-type scheme} \label{sec:GTS}
The numerical scheme we will derive to approximate system \eqref{eq:RSW1D} is a Godunov-type scheme. In this section, we recall the framework of this family of finite volume schemes.

\subsection{Principle}\label{sec:principle}
In the following, we consider a space discretisation made of cells $K_i = (x_{i-1/2},x_{i+1/2})$, with constant length $\dx$. The center of the cell $K_i$ is denoted by $x_i$.
The topography is discretized by
$$z_i=\frac{1}{\Delta x}\int_{K_i}z(x)dx.$$
At time $t^n$, we assume known an approximation of the solution of \eqref{eq:RSW1D} constant on each cell, $$w_{\Delta x}(x,t^n) = w_i^n, \text{ if } x \in K_i.$$
In order to simplify the notations, we set $\ww = (w, z)$, which belongs to the set
$$\widetilde{\Omega}=\{ \ww=(h,hu,hv,z)^T \in \R^3 ;  h>0 \}.$$
Since $z$ does not depend on time, we have $\ww_i^n=(w_i^n,z_i)$.

We aim to update this approximation at time $t^{n+1} = t^n+\dt$, with a step $\dt$ chosen according to a CFL condition. 
Godunov-type schemes are mainly based on Riemann problems, which are Cauchy problems for system \eqref{eq:RSW1D} with an initial data of the form 
\be \label{eq:riemann_pb}
\ww(x,0) = \begin{cases} 
 \ww_L & \text{ if } x<0, \\
 \ww_R & \text{ if } x>0. 
\end{cases} \ee
We denote by $\mathcal{W}_R(\frac{x}{t},\ww_L,\ww_R)$ the exact solution of \eqref{eq:RSW1D}--\eqref{eq:riemann_pb}.
This exact solution is usually very difficult to compute. Therefore, we prefer to use an approximate Riemann solver $\WR\left(\frac{x}{t},\ww_L,\ww_R\right)$ instead. According to \cite{31HLL1983}, the approximate Riemann solver has to satisfy the following consistency property:
\begin{equation*}
 \frac{1}{\dx} \int_{-\frac{\dx}{2}}^{\frac{\dx}{2}} \WR\left(\frac{x}{\dt},\ww_L,\ww_R\right) dx 
 = \frac{1}{\dx}\int_{-\frac{\dx}{2}}^{\frac{\dx}{2}} \mathcal{W}_R\left(\frac{x}{\dt},\ww_L,\ww_R\right) dx.
\end{equation*}
The average of the exact Riemann solution can be computed and the previous condition is equivalent to
\begin{multline} \label{eq:strong_consistency}
 \frac{1}{\dx} \int_{-\frac{\dx}{2}}^{\frac{\dx}{2}} \WR\left(\frac{x}{\dt},\ww_L,\ww_R\right) dx =  \frac{w_L+w_R}{2} - \frac{\dt}{\dx}(f(w_R)-f(w_L)) \\
+  \frac{1}{\dx} \int_0^{\dt} \int_{-\frac{\dx}{2}}^{\frac{\dx}{2}} s\left(\mathcal{W}_R\left(\frac{x}{t},\ww_L,\ww_R\right), z(x) \right) dxdt.
\end{multline}
In the absence of source term, we can enforce this equality to ensure the consistency of the approximate Riemann solver. However, it is not always possible to compute exactly the average of the source term. Therefore, it is usual to use a relevant approximation (see for instance \cite{15BouchutLIVRE,32BerthonChalons2016,12Ripa2016})
$$ S(\ww_L,\ww_R)\approx\frac{1}{\dt} \int_0^{\dt} \int_{-\frac{\dx}{2}}^{\frac{\dx}{2}} s\left(\mathcal{W}_R\left(\frac{x}{t},\ww_L,\ww_R\right), z(x)\right) dxdt.$$
This numerical source term should be consistent with the continuous source term $s$ in the following sense.

\begin{definition} \label{def:source_term_consistency}
 The numerical source term $S$ is consistent with the continuous source term $s(\ww) = s_{cor}(w) + s_{topo}(w)\px z$ if it satisfies
  \begin{equation} \label{eq:general_source_consistency}
 S((w,z_L),(w,z_R)) = s_{cor}(w) \dx + s_{topo}(w) [z].
 \end{equation}
\end{definition}

Provided a consistent numerical source term, the approximate Riemann solver can only satisfy a weaker version of \eqref{eq:strong_consistency}. It leads to the definition of a weakly consistent approximate Riemann solver.

\begin{definition}\label{def:weak_consistency}
	The approximate Riemann solver $\WR$ is weakly consistent if there exists a consistent numerical source term $S$ such that	
	\be \label{eq:weak_consistency}
	\frac{1}{\dx}\int_{-\frac{\dx}{2}}^{\frac{\dx}{2}} \WR\left(\frac{x}{\dt},\ww_L,\ww_R\right) dx = \frac{w_L+w_R}{2} - \frac{\dt}{\dx}(f(w_R)-f(w_L)) + \frac \dt \dx S(\ww_L,\ww_R).
	\ee	
\end{definition}

The following section will be devoted to derive a weakly consistent approximate Riemann solver. For now, we show how we can obtain a numerical scheme from an approximate Riemann solver. A Godunov-type scheme is built in two steps:

\begin{itemize}
 \item firstly, we consider the juxtaposition of approximate Riemann solvers at each interface $x_{i+1/2}
$, 
$$
w_{\Delta x}(x,t^n+t) = \WR\left(\frac{x-x_{i+1/2}}{t},\ww^n_{i},\ww_{i+1}^n \right), \text{ if }x \in (x_{i},x_{i+1});
$$

\item secondly, the update at time $t^{n+1}$ is obtained by averaging the previous function on each cell 
$$ w_{i+1}^n = \frac{1}{\dx}\int_{x_{i-1/2}}^{x_{i+1/2}} w_{\Delta x}(x,t^n+\dt)dx, 
$$
or equivalently
\begin{equation} \label{eq:scheme_update}
w_{i}^{n+1} = \frac{1}{\dx} \int_0^{\frac{\dx}{2}} \WR\left(\frac{x}{\dt},\ww_{i-1}^n,\ww_i^n\right)dx + \frac{1}{\dx} \int_{-\frac{\dx}{2}}^0 \WR\left(\frac{x}{\dt},\ww_i^n,\ww_{i+1}^n \right)dx.
\end{equation}
\end{itemize}

In order to prevent the approximate Riemann solvers to interact between each other, we must enforce the CFL restriction
\begin{equation} \label{eq:CFL_order1}
	\frac{\dt}{\dx} \max_{i \in \Z}| \lambda^\pm(\ww_i^n,\ww_{i+1}^n) | \leq \frac{1}{2},
\end{equation} 
where $\lambda^\pm(\ww_L,\ww_R)$ denotes both the maximum and minimum speed of the waves that appear in $\WR(\frac{x}{t},\ww_L,\ww_R)$.
Under this condition, we can write the Godunov-type scheme as a finite volume scheme (see for instance \cite{32BerthonChalons2016})
\begin{equation}  \label{eq:godunov_type_scheme}
w_i^{n+1} = w_i^n - \frac{\dt}{\dx}(F_{i+1/2}^n - F_{i-1/2}^n) + \frac{\dt}{2 \dx} (S_{i+1/2}^n + S_{i-1/2}^n),
\end{equation}
with $F_{i+1/2}^n = F(\ww_i^n,\ww_{i+1}^n)$ and $S_{i+1/2}^n = S(\ww_i^n,\ww_{i+1}^n)$, where the numerical flux is given by
\begin{multline} \label{eq:numerical flux general def}
	F(\ww_L,\ww_R) = \frac{f(w_L)+f(w_R)}{2} - \frac{\dx}{4\dt}(w_R-w_L) \\
	+ \frac{1}{2\dt} \left( \int_0^{\frac{\dx}{2}}\WR\left(\frac{x}{\dt},\ww_L,\ww_R \right)dx - \int_{-\frac{\dx}{2}}^0 \WR\left(\frac{x}{\dt},\ww_L,\ww_R \right) dx \right),
\end{multline}
and the numerical source term $S(\ww_L,\ww_R)$ is the same as introduced in \cref{def:weak_consistency}.

At this point, the only property the scheme has to satisfy is the weak consistency of the approximate Riemann solver. We now list some other properties the scheme should satisfy.

\subsection{Numerical scheme properties}\label{sec:numerical_scheme_properties}

We present two important features of numerical schemes in this context: robustness and well-balancing. Godunov-type schemes have the advantage of inheriting these properties from the approximate Riemann solver $\WR$. First, we study the preservation of fluid height positivity.

\begin{lemma}\label{lem:robust}
 If the approximate Riemann solver $\WR$ satisfies the robustness condition
 \be \label{eq:RS_robustness}
 \forall (\ww_L,\ww_R) \in \widetilde{\Omega}^2,\, \forall \xi \in \R,\, \WR\left(\xi,\ww_L,\ww_R\right) \in \Omega,
 \ee
 then under the CFL condition \eqref{eq:CFL_order1}, the Godunov-type scheme \eqref{eq:godunov_type_scheme} preserves the positivity of the fluid height:
$$\forall i \in \Z, h_i^n>0  \Rightarrow  \forall i \in \Z, h_{i}^{n+1}>0.$$
\end{lemma}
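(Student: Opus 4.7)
The plan is to exploit the formula \eqref{eq:scheme_update} that expresses $w_i^{n+1}$ as an integral of the juxtaposed approximate Riemann solvers over the cell $K_i$, and to read the first component to obtain $h_i^{n+1}$ as an integral of strictly positive integrands.

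First I would argue by induction on $n$. Assuming $h_i^n>0$ for all $i \in \Z$, one has $\ww_i^n = (w_i^n,z_i) \in \widetilde\Omega$ for every $i$. The CFL condition \eqref{eq:CFL_order1} guarantees that the maximum wave speed emitted by each Riemann solver is bounded by $\dx/(2\dt)$, so that the solvers centred at $x_{i-1/2}$ and $x_{i+1/2}$ do not interact inside $K_i$. This is precisely what validates representation \eqref{eq:scheme_update}; without this step, the integrals could overlap and the simple representation would fail.

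Next I would extract the first component of \eqref{eq:scheme_update}, which yields
\begin{equation*}
h_i^{n+1} = \frac{1}{\dx}\int_0^{\dx/2}\widehat{h}_R\!\left(\tfrac{x}{\dt},\ww_{i-1}^n,\ww_i^n\right)dx + \frac{1}{\dx}\int_{-\dx/2}^{0}\widehat{h}_R\!\left(\tfrac{x}{\dt},\ww_i^n,\ww_{i+1}^n\right)dx,
\end{equation*}
where $\widehat{h}_R$ denotes the first (water height) component of $\WR$. Since $\ww_{i-1}^n,\ww_i^n,\ww_{i+1}^n \in \widetilde\Omega$, the robustness hypothesis \eqref{eq:RS_robustness} ensures that $\WR(\xi,\cdot,\cdot) \in \Omega$ for every $\xi$, i.e.\ $\widehat{h}_R(\xi,\cdot,\cdot)>0$ pointwise. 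Both integrands are therefore strictly positive on intervals of positive measure, so each integral is strictly positive and their sum $h_i^{n+1}$ is strictly positive as well. An induction on $n$ then yields the claim for all time steps.

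There is no real obstacle here: the statement is essentially the observation that a Godunov-type update is, by construction, a convex-in-time average of values of $\WR$, and an average of strictly positive quantities is strictly positive. The only point that requires a little care is that the CFL restriction \eqref{eq:CFL_order1} is indispensable to justify \eqref{eq:scheme_update}, and hence to rule out the possibility that interacting Riemann fans contaminate the half-cells with data outside $\Omega$.
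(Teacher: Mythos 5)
Your proposal is correct and follows essentially the same route as the paper: the paper's proof simply observes that, under the CFL condition, \eqref{eq:scheme_update} exhibits $w_i^{n+1}$ as an average of states of $\WR$ lying in the convex set $\Omega$, which is exactly your argument (you merely read off the first component and argue positivity of the integral directly instead of invoking convexity of $\Omega$). No gap; your additional remarks on the role of the CFL restriction in justifying \eqref{eq:scheme_update} are consistent with the paper's setup.
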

\begin{proof}
Assuming $w_i^n \in \Omega $ for all $i \in \Z,$ the state $w_i^{n+1}$ defined by \eqref{eq:scheme_update} appears to be an average of elements that belong to the convex set $\Omega$.
\end{proof}

Similarly, the Godunov-type scheme is well-balanced as soon as the approximate Riemann solver is. To be more specific, we have to introduce the notion of local steady state.

\begin{definition}\label{def:local steady state}
A couple of states $(\ww_L,\ww_R)$ defines a local steady state for the system \eqref{eq:RSW1D} if it satisfies
\be \label{eq:steady states discretisation}
\begin{cases}
	 h_Ru_R=h_Lu_L=q, \\
	 \left[ \frac{u^2}{2} + g(h+z) \right] = \dx f\overline{v}, \\
	 q[v] = - \dx fq,
\end{cases}
\ee
or equivalently if the local steady state indicator 
\be \label{eq:steady state indicator epsLR}\mathcal{E}(\ww_L,\ww_R,\dx) = \sqrt{\Big\vert \left[ hu \right] \Big\vert^2 + \Bigg\vert \left[\frac{u^2}{2}+g (h+z) \right] - \dx f\overline{v} \Bigg\vert^2 + \Big\vert \overline{hu} ( [v] + f \Delta x ) \Big\vert^2}, 
\ee
is equal to zero.
\end{definition}

All along this paper, we write $\mathcal{E}_{LR}$ rather than $\mathcal{E}(\ww_L,\ww_R,\dx)$ if no ambiguity is possible.

Let us notice that \eqref{eq:steady states discretisation} is actually a discretization of the equations \eqref{eq:steadystates} that define the continuous steady states. Other choices of discretization could be possible, especially in the choice of the mean value $\overline{v}$.

The definition of a well-balanced Riemann solver follows.

\begin{definition}\label{def:ARS WB}
	An approximate Riemann solver is said well-balanced if
	$$\WR\left(\frac{x}{t},\ww_L,\ww_R\right) = %{\widetilde{\mathcal{W}}_{R}}^0(w_L,w_R) =
	\begin{cases}
	 w_L & \text{ if } x>0, \\
	 w_R & \text{ if } x<0, 
	\end{cases} $$ as soon as $(\ww_L,\ww_R)$ is a local steady state.
\end{definition}

Similarly, we define a discrete steady state and a well-balanced scheme.

\begin{definition}\label{def:approx at steady state}    $~$
	\begin{enumerate}
		\item 	A sequence $(\ww_i^n)_{i \in \Z}$ defines a discrete steady state if the couples $(\ww_i,\ww_{i+1})$ are local steady states for all $i \in \Z$.
		\item A numerical scheme is said well-balanced if for any discrete steady state $(\ww_i^n)_{i \in \Z}$, we have $$ w_i^{n+1} = w_i^n, \; \forall i \in \Z.$$ 
	\end{enumerate}

\end{definition}

Now we prove that the well-balanced property of the approximate Riemann solver extends to the numerical scheme.

\begin{lemma}\label{lem:WB}
If the approximate Riemann solver $\WR$ is well-balanced, then the associated Godunov-type scheme \eqref{eq:godunov_type_scheme} is well-balanced too.
\end{lemma}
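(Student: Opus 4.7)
The plan is to plug the well-balanced property of $\WR$ directly into the cell-average formula~\eqref{eq:scheme_update}, and check that each of the two half-cell integrals collapses to $w_i^n$, so that no averaging ever mixes different cell values.

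First, I would observe that if $(\ww_j^n)_{j\in\Z}$ is a discrete steady state in the sense of Definition~\ref{def:approx at steady state}, then by definition each of the two adjacent pairs $(\ww_{i-1}^n,\ww_i^n)$ and $(\ww_i^n,\ww_{i+1}^n)$ is a local steady state in the sense of Definition~\ref{def:local steady state}. Applying the well-balanced hypothesis on $\WR$ from Definition~\ref{def:ARS WB} to each of these pairs, the self-similar Riemann profile $\xi\mapsto\WR(\xi,\cdot,\cdot)$ is piecewise constant, taking only the two input states as values on the half-lines $\{\xi<0\}$ and $\{\xi>0\}$.

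Second, I would substitute these two piecewise-constant profiles into~\eqref{eq:scheme_update}. On the interval $(0,\dx/2)$, the solver $\WR(x/\dt,\ww_{i-1}^n,\ww_i^n)$ contributes the half that sits on the side of cell $K_i$, which equals $w_i^n$; symmetrically, on the interval $(-\dx/2,0)$, the solver $\WR(x/\dt,\ww_i^n,\ww_{i+1}^n)$ contributes its half on the side of $K_i$, again equal to $w_i^n$. Each integral is therefore $\tfrac{\dx}{2}\,w_i^n$, and dividing by $\dx$ yields $w_i^{n+1}=w_i^n$ for every $i\in\Z$, which is exactly the well-balanced property for the scheme.

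There is essentially no real obstacle here: the lemma is the standard fact that Godunov-type schemes inherit well-balancing from their Riemann solvers. The only care required is bookkeeping, namely matching the left-state/right-state convention used in Definition~\ref{def:ARS WB} with the orientation of the two half-cell integrals in~\eqref{eq:scheme_update}, so that the value $w_i^n$ (and not $w_{i-1}^n$ or $w_{i+1}^n$) is the one selected on both half-cells of $K_i$. Because the argument works interface by interface and uses only the definition of a local steady state at each interface, no CFL-type restriction or positivity hypothesis is needed beyond what is already implicit in evaluating the Riemann solver.
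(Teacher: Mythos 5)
Your proof is correct and follows exactly the paper's own argument: at a discrete steady state each interface pair is a local steady state, so by Definition~\ref{def:ARS WB} both half-cell integrals in~\eqref{eq:scheme_update} reduce to integrals of the constant $w_i^n$, giving $w_i^{n+1}=w_i^n$. Your bookkeeping remark is well taken --- note that the cases in Definition~\ref{def:ARS WB} as printed appear to have $x>0$ and $x<0$ swapped relative to the convention actually used in \eqref{eq:ARS} and \eqref{eq:scheme_update}, and you correctly resolved this in favor of the standard orientation ($w_L$ for $x<0$, $w_R$ for $x>0$).
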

\begin{proof}
	We consider a discrete steady state $(\ww_i^n)_{i \in \Z}$. Since the approximate Riemann solver is well-balanced, we get from \eqref{eq:scheme_update} that
$$w_i^{n+1} = \frac{1}{\dx} \int_0^{\frac{\dx}{2}} w_i^n dx + \frac{1}{\dx} \int_{-\frac{\dx}{2}}^0 w_i^n dx = w_i^n, \ \forall i \in \Z,$$
and the proof is complete.
\end{proof}

To summarize, the approximate Riemann solver that we will derive in the next section has to satisfy the following properties:
\begin{itemize}
\item \label{item:consistency with exact solver} the weak consistency condition \eqref{eq:weak_consistency},
\item \label{item:robust} the robustness condition \eqref{eq:RS_robustness},
\item \label{item:fully WB} the fully well-balanced property given by \cref{def:ARS WB}.
\end{itemize}
% All theses constraints seem a bit restrictive. However they lead to a quite natural choice of approximate Riemann solver as we can see in the next section.

\section{Approximate Riemann solver} \label{sec:approximate_Riemann_solver} Here, we propose an approximate Riemann solver for the system \eqref{eq:RSW1D} that satisfies the three previous properties. All along its construction, we will carefully choose the used relations for this purpose.  We adapt to the RSW system the strategy proposed in \cite{34DansacBerthonClainFoucher2016,47Berthon_Dansac_Manning_Friction,46Berthon_Blood_flow} for different systems.

\subsection{Source term discretisation}\label{sec:numerical source term}

The aim of this section is to propose a numerical source term $S(\ww_L,\ww_R)=(0,S^{hu}(\ww_L,\ww_R),S^{hv}(\ww_L,\ww_R))^T$ which is consistent with the continuous source term $s$ in the sense of \cref{def:source_term_consistency}. Moreover this choice of a numerical source term has to be coherent with the required well-balanced property.

To this end, we start by considering a Riemann data $(\ww_L,\ww_R)$ which is a local steady state according to \cref{def:local steady state}. Since we want the approximate Riemann solver to be both weakly consistent and well-balanced, the condition \eqref{eq:weak_consistency} enforces
\be \label{eq:source_term_flux}
S(\ww_L,\ww_R) = f(w_R) - f(w_L),
\ee
or equivalently 
\begin{align*}
& S^{hu}(\ww_L,\ww_R) = h_Ru_R^2 + \frac{gh_R^2}{2} - h_Lu_L^2 - \frac{gh_L^2}{2},\\
& S^{hv}(\ww_L,\ww_R) = h_Ru_Rv_R - h_Lu_Lv_L.
\end{align*}
Based on the chosen \cref{def:local steady state} of the local steady states, these relations can be written
\begin{align}
\label{eq:Shu at steady state, alpha_LR}& S^{hu}(\ww_L,\ww_R) = \left( g\overline{h} - \frac{q^2}{h_Lh_R}\right) [h],\\
\label{eq:Shv at steady state}& S^{hv}(\ww_L,\ww_R) = -\dx fq.
\end{align}

The expression \eqref{eq:Shu at steady state, alpha_LR} cannot be used to define the numerical source term in the general case since it would not be consistent in the sense of \cref{def:source_term_consistency}.
Hence, we continue to develop this expression for a local steady state.
%
%Our goal is to give a consistent definition of the source terms in the sense of definition \eqref{def:source term consistency} that gives back \eqref{eq:Shu at steady state, alpha_LR} and \eqref{eq:Shv at steady state} as soon as the two states of the Riemann problem are at local equilibrium \eqref{eq:steady states discretisation}. It is easy to see that
%\be \label{eq:Shv general definition}
%	S^{hv} = -\dx f \widetilde{q}, 
%\ee 
%satisfies this criterion for any $\widetilde{q}$ that depends on $w_L$ and $w_L$ and gives back $q_0$ at steady state. The approximate source term $S^{hu}$ requires more attention. We are going to adapt the method proposed in \cite{33BerthonBayeLeSeck2020} by giving an other expression of $S^{hu}$ at steady state, based on the choosen discretisation. We consider again that $(w_L,w_R)$ is at local steady state \eqref{eq:steady states discretisation}. 
First, from the second equality of \eqref{eq:steady states discretisation}, we get
\be 
\frac{q^2}{2h_R^2} + g(h_R+z_R) - \frac{q^2}{2h_L^2} -g(h_L+z_L) = \dx f \overline{v}, 
\ee
which leads to 
\be \label{eq:equilibrium} 
[h]\left(1-\frac{q^2 \overline{h}}{gh_L^2h_R^2}\right) = \dx f \overline{v}/g - (z_R - z_L).
\ee
It follows
\be \label{eq:relation at steady states between hR-hL and zR-zL}
	 [h] = \frac{\dx f\overline{v}/g - [z]}{1-\Fr},
\ee
where $\Fr = \frac{\overline{h}|u_Lu_R|}{gh_Lh_R}$ is a discrete Froude number. 
Injecting this relation into \eqref{eq:Shu at steady state, alpha_LR}, we get
\begin{equation}\label{eq:Shu at steady state0}
 S^{hu}(\ww_L,\ww_R) = \dx f\overline{h}\overline{v}-g\overline{h}[z] + \frac{g\Fr [h]^2}{4\overline{h}} \frac{ (\dx f\overline{v}/g - [z])}{(1-\Fr)}.
\end{equation}
% \begin{align*}
%  S^{hu}(w_L,w_R)
% & = g\overline{h} \left(1-\Fr\frac{h_Lh_R}{\overline{h}^2} \right) \frac{\dx f\overline{v}/g - [z] }{1-\Fr}\\
% & = \dx f\overline{h}\overline{v} -g\overline{h}[z] + g\overline{h}\Fr \left(1 - \frac{h_Lh_R}{\overline{h}^2}\right) \frac{\dx f\overline{v}/g -[z]}{1-\Fr} \\
% & = \dx f\overline{h}\overline{v}-g\overline{h}[z] + \frac{g\Fr [h]^2}{4\overline{h}} \frac{ (\dx f\overline{v}/g - [z])}{(1-\Fr)}.
% & = g\overline{h} \left(1-\frac{q^2}{g\overline{h}h_Lh_R} \right) \frac{\dx f\overline{v}/g - [z] }{1-\Fr}\\
% & = \dx f\overline{h}\overline{v} -g\overline{h}[z] + g\overline{h} \left(\Fr - \frac{q^2}{g\overline{h}h_Lh_R}\right) \frac{\dx f\overline{v}/g -[z]}{1-\Fr} \\
% & = \dx f\overline{h} \overline{v} -g\overline{h}[z] +  \Fr  \left( g \overline{h} - g\frac{h_R h_L}{\overline{h}} \right)\frac{(\dx f\overline{v}/g -[z])}{1-\Fr}\\
% & = \dx f\overline{h}\overline{v}-g\overline{h}[z] + \frac{g\Fr [h]^2}{4\overline{h}} \frac{ (\dx f\overline{v}/g - [z])}{1-\Fr}.
% \end{align*}
%
We inject one more time \eqref{eq:relation at steady states between hR-hL and zR-zL} in the above equality to obtain a more convenient expression 
\be \label{eq:Shu at steady state, zR-zL}
	 S^{hu}(\ww_L,\ww_R) = \dx f\overline{h}\overline{v}-g\overline{h} [z]+ \frac{g\Fr [h]}{4\overline{h}} \frac{ (\dx f\overline{v}/g - [z])^2}{(1-\Fr)^2}.
\ee

This expression is \emph{a prior}i not well-defined when $\Fr = 1$. However, combining \eqref{eq:relation at steady states between hR-hL and zR-zL} and \eqref{eq:Shu at steady state0} leads to rewrite $S^{hu}(\ww_L,\ww_R)$ under the form
$$S^{hu}(\ww_L,\ww_R) = g\overline{h}[h](1-\Fr) + \frac{g}{4\overline{h}}\Fr [h]^3.$$
Therefore $ S^{hu}(\ww_L,\ww_R)$ admits the following limit when $\Fr$ goes to $1$
\be \label{eq:Shu at steady state + Fr = 1}
\lim_{\Fr \to 1} S^{hu}(\ww_L,\ww_R) =  \frac{g}{4\overline{h}} [h]^3.
\ee

At this point, \eqref{eq:Shu at steady state, zR-zL} and \eqref{eq:Shv at steady state} are suitable definitions for the numerical source terms $S^{hu}$ and $S^{hv}$ when a local steady state is considered.
However, let us point out that the limit \eqref{eq:Shu at steady state + Fr = 1} is only valid for a local steady state. Therefore, the right-hand side of \eqref{eq:Shu at steady state, zR-zL} is not well-defined when $\varepsilon_{LR}\neq 0$ and $\Fr=1$. To deal with this issue, we add a nonnegative term $\mathcal{E}_{LR}$ to the denominator as follows
\begin{equation} \label{eq:Shu generalised not well-defined}
 S^{hu}(\ww_L,\ww_R) = \dx f\overline{h}\overline{v}-g\overline{h} [z]+ \frac{g \Fr [h]}{4\overline{h}} \frac{ (\dx f\overline{v}/g - [z])^2}{(1- \Fr)^2+\mathcal{E}_{LR}}.
\end{equation}
Indeed, the denominator in \eqref{eq:Shu generalised not well-defined} can only vanish when $\varepsilon_{LR}=0$, which means the Riemann data $(\ww_L,\ww_R)$ is a local steady state. But then the source term can be defined by the limit \eqref{eq:Shu at steady state + Fr = 1} as mentioned before.

To generalise \eqref{eq:Shv at steady state} away from local steady states,  we need to define a general discharge $\widetilde{q}$ which coincides with $q$ as soon as a local steady state is considered or as soon as $w_L=w_R=w$. There are several possible definitions, $\widetilde{q} = \overline{hu}$ for instance.

% Moreover, to ensure the source term \eqref{eq:Shu at steady state0} is always well-defined, we follow the strategy proposed in \cite{33BerthonBayeLeSeck2020}. It consists in adding a nonnegative term $\varepsilon_{LR}\dx$ to the denominator as follows
% \begin{equation} \label{eq:Shu generalised not well-defined}
%  S^{hu}(\ww_L,\ww_R) = \dx f\overline{h}\overline{v}-g\overline{h} [z]+ \frac{g \Fr [h]}{4\overline{h}} \frac{ (\dx f\overline{v}/g - [z])^2}{(1- \Fr)^2+\varepsilon_{LR}\Delta x}.
% \end{equation}
% Indeed, the denominator in \eqref{eq:Shu generalised not well-defined} can only vanish when $\varepsilon_{LR}=0$, which means the Riemann data $(\ww_L,\ww_R)$ is a local equilibrium. But then the source term can be defined by the limit \eqref{eq:Shu at steady state + Fr = 1} as mentionned before.

 We finally obtain the following definitions for the numerical source terms
\begin{multline} \label{eq:Shu_general_definition}
 S^{hu}(\ww_L,\ww_R) = \\
 \begin{cases}
  \displaystyle \dx f\overline{h}\overline{v}-g\overline{h}[z]+\frac{g \Fr [h]}{4\overline{h}} \frac{ (\dx f\overline{v}/g - [z])^2}{(1-\Fr)^2 + \mathcal{E}_{LR} } & \text{if } \Fr \neq 1 \text{ or } \mathcal{E}_{LR} \neq 0,\\[1.5em]
  \displaystyle \frac{g}{4\overline{h}}[h]^3 & \text{if } \Fr = 1 \text{ and } \mathcal{E}_{LR} = 0.
 \end{cases}
\end{multline}
\be \label{eq:Shv_general_definition}
S^{hv}(\ww_L,\ww_R) = -\dx f \widetilde{q}.
\ee

To conclude, we prove these numerical source terms are consistent.
\begin{lemma}\label{lem:numerical source term definition}
The numerical source term $S(\ww_L,\ww_R)=(0,S^{hu}(\ww_L,\ww_R),S^{hv}(\ww_L,\ww_R))^T$ defined by \eqref{eq:Shu_general_definition} and \eqref{eq:Shv_general_definition} is consistent in the sense of \cref{def:source_term_consistency}.
%\begin{equation} \label{eq:source_consistency}
% \frac{ S(\ww_L,\ww_R)} {\dx}  = S_{cor}(\ww_L,\ww_R)  + S_{topo}(\ww_L,\ww_R)  \frac{[z]}{\dx},
%\end{equation}
% where
%\begin{align*} 
%	& S_{cor}^{hu}(\ww_L,\ww_R) =  \displaystyle 
%	\begin{cases}\displaystyle f\overline{h}\overline{v} + \frac{g \Fr [h]}{4\overline{h}} \frac{ \dx ( f\overline{v}/g)^2}{(1-\Fr)^2 + \mathcal{E}_{LR} \dx } & \text{if } \Fr \neq 1 \text{ or } \mathcal{E}_{LR} \neq 0, \\
%	 \displaystyle f\overline{h}\overline{v} + \frac{g[h]^3}{4\overline{h}\dx} & \text{if } \Fr = 1 \text{ and } \mathcal{E}_{LR} = 0,
%	\end{cases} \\
%	& S_{topo}^{hu} (\ww_L,\ww_R) = 
%	\begin{cases}
%	 \displaystyle - g\overline{h}  + \frac{g\Fr [h]}{4\overline{h}} \frac{ ([z] - 2\dx f \overline{v}/g )}{(1-\Fr)^2 + \mathcal{E}_{LR} \dx } & \text{if } \Fr \neq 1 \text{ or } \mathcal{E}_{LR} \neq 0, \\
%	 \displaystyle -g\overline{h} & \text{if }\Fr = 1 \text{ and } \mathcal{E}_{LR} = 0,
%	\end{cases} \\
%	& S_{cor}^{hv}(\ww_L,\ww_R) = - f \widetilde{q}.
%\end{align*}
%
%It is consistent in the sense of \cref{def:source_term_consistency}.
\end{lemma}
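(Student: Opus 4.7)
The plan is to substitute $\ww_L=(w,z_L)$ and $\ww_R=(w,z_R)$ directly into the definitions \eqref{eq:Shu_general_definition}--\eqref{eq:Shv_general_definition} and match the result with \eqref{eq:general_source_consistency} componentwise. Under this substitution $h_L=h_R=h$, $u_L=u_R=u$, $v_L=v_R=v$, so $[h]=0$, $\overline h=h$, $\overline v=v$, $\overline{hu}=hu$, and $[hu]=0$.

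The first (mass) component vanishes on both sides, so nothing is to check. For the third component I rely on the fact that the discharge surrogate $\widetilde q$ was, by design, required to reduce to $q=hu$ whenever $w_L=w_R$ (e.g.\ $\widetilde q=\overline{hu}$); hence $S^{hv}=-\dx f hu$, which is exactly $\dx$ times the third component of $s_{cor}(w)$, and the $s_{topo}$ term contributes nothing here.

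For the second component I would split on the two branches of \eqref{eq:Shu_general_definition}. The generic branch ($\Fr\neq 1$ or $\mathcal{E}_{LR}\neq 0$) is immediate: the fractional correction carries an explicit factor $[h]$ in its numerator, which vanishes under $h_L=h_R$, leaving
\begin{equation*}
S^{hu}((w,z_L),(w,z_R))=\dx f\overline{h}\,\overline{v}-g\overline{h}[z]=\dx f hv-gh[z],
\end{equation*}
which is precisely the second component of $s_{cor}(w)\dx+s_{topo}(w)[z]$.

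The delicate point is the degenerate branch $\Fr=1$ and $\mathcal{E}_{LR}=0$, where the formula returns $g[h]^3/(4\overline h)=0$; here one must show that the right-hand side of \eqref{eq:general_source_consistency} also vanishes. Since $h_L=h_R=h$, the condition $\Fr=1$ reads $u^2=gh$, forcing $u\neq 0$. Plugging $h_L=h_R$, $u_L=u_R$, $v_L=v_R$ into \eqref{eq:steady state indicator epsLR}, the first contribution is automatically zero, the second reduces to $|g[z]-\dx f v|$, and the third becomes $|hu\cdot f\dx|$. As $hu\neq 0$, vanishing of $\mathcal{E}_{LR}$ forces $f\dx=0$, hence $f=0$, and then $g[z]=0$, hence $[z]=0$. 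Under these constraints the target $\dx f hv-gh[z]$ is identically zero, matching the formula. This switching point of the piecewise definition of $S^{hu}$ is the only genuine obstacle; the rest is a direct substitution.
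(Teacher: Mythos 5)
Your proof is correct and follows essentially the same route as the paper: direct substitution disposes of $S^{hv}$ and of the generic branch of $S^{hu}$, and the only real work is the branch $\Fr=1$, $\mathcal{E}_{LR}=0$. There you unpack the indicator \eqref{eq:steady state indicator epsLR} directly for equal states (deducing $f=0$ and $[z]=0$), whereas the paper invokes \eqref{eq:equilibrium} to get $\dx f\overline{v}/g=[z]$ and rewrites $S^{hu}$ accordingly; both arguments rest on the same observation that the steady-state relation forces the target $\dx fhv-gh[z]$ to vanish in this degenerate case.
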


\begin{proof}
	The consistency is immediate for $S^{hv}$, as for $S^{hu}$ in the case $\Fr \neq 1$ or $\mathcal{E}_{LR} \neq 0$. In the case $\Fr = 1$ and $\mathcal{E}_{LR} = 0$, let us notice that according to \eqref{eq:equilibrium}, we have $\dx f\overline{v}/g = [z]$. Therefore the source term $S^{hu}$ can be written under the form
	$$ S^{hu}(\ww_L,\ww_R) = \dx f\overline{h}\overline{v}-g\overline{h}[z] +  \frac{g[h]^3}{4\overline{h}},$$ 
	and the consistency follows. 
\end{proof}

%\begin{proof}  
%First, we check that expression \eqref{eq:source_consistency} is coherent with \eqref{eq:Shu_general_definition}. It's immediate in the case $\Fr \neq 1$ or $\mathcal{E}_{LR}\neq0$. In the case $\Fr = 1$ and $\mathcal{E}_{LR}=0$, let us notice that according to \eqref{eq:equilibrium}, we have $\dx f\overline{v}/g = [z]$. It follows
%$$ S_{cor}^{hu}(\ww_L,\ww_R)+S_{topo}
%(\ww_L,\ww_R)\frac{[z]}{\dx} = \frac{g[h]^3}{4\overline{h}\dx} = \frac{S^{hu}(\ww_L,\ww_R)}{\dx}.$$
%
%Next, we easily see that
%$$S_{cor}^{hu} (\ww,\ww) =  fhv, \quad S_{cor}^{hv}(\ww,\ww) = - fhu,$$ and $$S_{topo}^{hu} (\ww_L,\ww_R) = - gh + o(1)$$
%when $\ww_L,\ww_R$ tends to $\ww$. Thus the numerical source term $S$ is consistent with the continuous source term $s$.
%\end{proof}

\subsection{Approximate Riemann solver} \label{sec:ARS}

The numerical source term being well-defined, we now turn to build a weakly consistent approximate Riemann solver which is fully well-balanced and preserves the positivity of the fluid height.

Let us notice that the well-balanced property of the approximate Riemann solver strongly depends on the choice that was made in  \cref{def:local steady state} to discretise the steady states. However, the following procedure stands for any discretisation of the steady states. This is not the case for the source term discretisation which was done in the previous section and should be adapted to the steady state discretisation.

We consider a Riemann data $(\ww_L,\ww_R)\in\widetilde{\Omega}^2$. We choose to build an approximate Riemann solver $\WR$ with four constant states separated by three discontinuities with respective speed $\lambda_L < 0$, $\lambda_0 = 0$ and $\lambda_R>0$, as described in \cref{fig:Approximate Riemann solver}. This approximate Riemann solver writes
\begin{equation}\label{eq:ARS}
\WR\left(\frac{x}{t},\ww_L,\ww_R\right) = 
\begin{cases}
	 w_L   & \text{ if } \frac{x}{t}< \lambda_L, \\
	 w_L^\star &\text{ if }  \lambda_L<\frac{x}{t}<0, \\
	 w_R^\star &\text{ if }  0<\frac{x}{t}<\lambda_R, \\
	 w_R   &\text{ if }  \frac{x}{t}>\lambda_R.
\end{cases}
\end{equation}
This leads to two intermediate states $w_L^\star$ and $w_R^\star$ and thus six unknowns. We are searching for as many equations as unknowns.
\begin{figure}[h!] \label{fig:Approximate Riemann solver}
\centering
\begin{tikzpicture}[scale=0.5]
	% AXES
	\draw[-] (-6,0) -- (6,0);
    \draw[-] (-3,5.5) -- (0,0);
    \draw[-] (0,5.5) -- (0,0);
    \draw[-] (4.5,5.5) -- (0,0);
                
    %ÉTIQUETTES INCONNUES
	\draw[black] (-4.5,2) node {$w_L$};
    \draw[black] (-1.,4) node {$w_L^\star$};
    \draw[black] (1.5,4) node {$w_R^\star$};
    \draw[black] (4.5,2) node {$w_R$};
                   
   	%ÉTIQUETTES ONDES 
	\draw[black] (-3.,5) node [above left] {$\lambda_L$};
    \draw[black] (0,5.5) node [above] {$0$};
    \draw[black] (4.5,5.) node [above right] {$\lambda_R$};
\end{tikzpicture} 
\caption{Approximate Riemann solver $\WR$}
\end{figure}

In order to simplify the subsequent notations, we introduce the intermediate state of the HLL approximate Riemann solver (see \cite{31HLL1983}) 
\be \label{eq:def_HLL}
w^{HLL} = \frac{\lambda_R w_R - \lambda_L w_L}{\lambda_R - \lambda_L} - \frac{f(w_R) - f(w_L)}{\lambda_R - \lambda_L}.
\ee
Let us notice that its first component can be written as
$$h^{HLL} = \frac{u_L-\lambda_L}{\lambda_R-\lambda_L}h_L + \frac{\lambda_R-u_R}{\lambda_R-\lambda_L}h_R.$$
As a consequence, as soon as the speeds $\lambda_L$ and $\lambda_R$ satisfy
\begin{equation} \label{eq:lambda}
\lambda_L<u_L\qquad\text{and}\qquad \lambda_R>u_R,
\end{equation}
we have $h^{HLL}>0$.

First, the weak consistency condition  \eqref{eq:weak_consistency} writes after a standard computation
\begin{align}
	\label{eq:Riemann solver relations consistency 1} & \lambda_R h_R^\star - \lambda_L h_L^\star = (\lambda_R-\lambda_L) h^{HLL}, \\
	\label{eq:Riemann solver relations consistency 2} & \lambda_R h_R^\star u_R^\star - \lambda_L h_L^\star u_L^\star = (\lambda_R - \lambda_L) (hu)^{HLL} + S^{hu}(\ww_L,\ww_R), \\
	\label{eq:Riemann solver relations consistency 3} & \lambda_R h_R^\star v_R^\star - \lambda_L h_L^\star v_L^\star  = (\lambda_R - \lambda_L) (hv)^{HLL} + S^{hv}(\ww_L,\ww_R).
\end{align}
It provides three relations that will ensure the weak consistency of the approximate Riemann solver.
The three missing relations will come from the fully well-balanced constraint. In other words, we have to choose three additional relations such that the solution of the system formed by these relations and equations \eqref{eq:Riemann solver relations consistency 1}, \eqref{eq:Riemann solver relations consistency 2} and \eqref{eq:Riemann solver relations consistency 3} satisfies
$$ w_R^\star = w_R \quad \text{and} \quad w_L^\star = w_L$$
as soon as $(\ww_L,\ww_R)$ is a local steady state.
We will deal with each variable separately.

First, for the variable $hu$, the simplest choice is to enforce the relation
\begin{equation} \label{eq:link steady state source term intermediate 1}
 h_L^\star u_L^\star = h_R^\star u_R^\star = q^\star.
\end{equation}
The system \eqref{eq:Riemann solver relations consistency 2}--\eqref{eq:link steady state source term intermediate 1} can be solved immediately to obtain the intermediate discharge
\begin{equation} \label{eq:qstar}
 q^\star = (hu)^{HLL} + \frac{S^{hu}(\ww_L,\ww_R)}{\lambda_R - \lambda_L}.
\end{equation}

Concerning the variable $h$, let us notice that when $(\ww_L,\ww_R)$ is a local steady state, we have according to \eqref{eq:Shu at steady state, alpha_LR}
$$\alpha_{LR}(h_R-h_L)=S^{hu}(\ww_L,\ww_R),$$
where $\alpha_{LR} = g\overline{h} - |u_Lu_R|$. A simple choice for the additional equation would be
$$\alpha_{LR}(h_R^\star - h_L^\star) = S^{hu}(\ww_L,\ww_R).$$
Together with equation \eqref{eq:Riemann solver relations consistency 1}, this leads to a simple linear system. However this system does not admit a unique solution when $\alpha_{LR}$ vanishes. We suggest the following modification
\begin{equation*}
 (\alpha_{LR}^2+\mathcal{E}_{LR})(h_R^\star - h_L^\star) = \alpha_{LR}S^{hu}(\ww_L,\ww_R).
\end{equation*}
The coefficient $\alpha_{LR}^2+\mathcal{E}_{LR}$ can still vanish when $\mathcal{E}_{LR}=0$. To get rid of this problem, we introduce the following quantity
$$\Delta_{LR}^h =
\begin{cases} \displaystyle\frac{\alpha_{LR}S^{hu}(\ww_L,\ww_R)}{\alpha_{LR}^2+ \mathcal{E}_{LR}} & \text{if } \mathcal{E}_{LR}\neq 0, \\
 h_R-h_L & \text{if } \mathcal{E}_{LR}=0,
\end{cases} $$
and we choose the following additional equation
\be h_R^\star - h_L^\star = \Delta_{LR}^h. \label{eq:additional_eq_h}
\ee
Solving the system \eqref{eq:Riemann solver relations consistency 1}--\eqref{eq:additional_eq_h}, we obtain
$$ h_L^\star = h^{HLL} - \frac{\lambda_R}{\lambda_R-\lambda_L} \Delta_{LR}^h, $$
$$ h_R^\star = h^{HLL} - \frac{\lambda_L}{\lambda_R-\lambda_L}\Delta_{LR}^h. $$

Nothing ensures these intermediate fluid heights to be positive. To address this issue, we adapt the cut-off procedure suggested in \cite{53Audusse_Chalons_Ung,34DansacBerthonClainFoucher2016,46Berthon_Blood_flow}. Let us introduce the threshold
\be\delta=\min(\varepsilon, h_L,h_R,h^{HLL}) \label{eq:delta},
\ee
where $\varepsilon>0$ is a small parameter.
If $h_L^\star<\delta$, we set $h_L^\star=\delta$, and $h_R^\star$ is modified according to \eqref{eq:Riemann solver relations consistency 1}. In this case, we have
$$h_R^\star = \left(1-\frac{\lambda_L}{\lambda_R}\right)h^{HLL}+\frac{\lambda_L}{\lambda_R} h_L^\star\ge \delta,$$
so both intermediate water heights $h_L^\star$ and $h_R^\star$ are positive. We proceed similarly if $h_R^\star<\delta$. Taking into account this procedure, the intermediate fluid heights write
\be \label{eq:hLstar}
h_L^\star = \min\left(\max\left(h^{HLL}-\frac{\lambda_R}{\lambda_R-\lambda_L}\Delta_{LR}^h,\delta\right), \left(1-\frac{\lambda_R}{\lambda_L}\right)h^{HLL}+\frac{\lambda_R}{\lambda_L}\delta\right),
\ee
\be \label{eq:hRstar}
h_R^\star = \min\left(\max\left(h^{HLL}-\frac{\lambda_L}{\lambda_R-\lambda_L}\Delta_{LR}^h,\delta\right), \left(1-\frac{\lambda_L}{\lambda_R}\right)h^{HLL}+\frac{\lambda_L}{\lambda_R}\delta\right).
\ee

Finally, we proceed similarly in order to derive the additional relations for the variable $hv$. We introduce the quantity
$$\Delta_{LR}^{v} = 
\begin{cases}
 \displaystyle\frac{\widetilde{q} \; S^{hv}(\ww_L,\ww_R)}{{\widetilde{q}}^2+\mathcal{E}_{LR}} & \text{if } \mathcal{E}_{LR}\neq 0, \\
 v_R - v_L & \text{if } \mathcal{E}_{LR}=0,
\end{cases} $$
and we enforce the following additional equation
\be \label{eq:additional_eq_v}
v_R^\star - v_L^\star = \Delta_{LR}^v.
\ee
The system \eqref{eq:Riemann solver relations consistency 3}--\eqref{eq:additional_eq_v} then leads to
\be
v_L^\star = \frac{(hv)^{HLL}}{h^{HLL}} + \frac{1}{(\lambda_R-\lambda_L)h^{HLL}}\left(S^{hv}(\ww_L,\ww_R)-\lambda_R h_R^\star\Delta_{LR}^v\right), \label{eq:vLstar}
\ee
\be
v_R^\star = \frac{(hv)^{HLL}}{h^{HLL}} + \frac{1}{(\lambda_R-\lambda_L)h^{HLL}}\left(S^{hv}(\ww_L,\ww_R)-\lambda_L h_L^\star\Delta_{LR}^v\right). \label{eq:vRstar}
\ee

The approximate Riemann solver is then completely defined by relations \eqref{eq:link steady state source term intermediate 1}, \eqref{eq:qstar}, \eqref{eq:hLstar}, \eqref{eq:hRstar}, \eqref{eq:vLstar} and \eqref{eq:vRstar}. Let us notice that it is automatically weakly consistent by the choice of the three first equations \eqref{eq:Riemann solver relations consistency 1}, \eqref{eq:Riemann solver relations consistency 2} and \eqref{eq:Riemann solver relations consistency 3}. The cut-off procedure does not alter the weak consistency, since equation \eqref{eq:Riemann solver relations consistency 1} is still enforced when it is applied. Moreover, thanks to the cut-off procedure, both intermediate fluid heights are positive as soon as the speeds $\lambda_L$ and $\lambda_R$ satisfy \eqref{eq:lambda}.

We now prove the approximate Riemann solver is also well-balanced.

\begin{lemma} \label{lem:ARS_WB}
 The approximate Riemann solver $\WR$ is well-balanced.
\end{lemma}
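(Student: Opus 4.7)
My plan is to assume $(\ww_L,\ww_R)$ is a local steady state (so $\mathcal{E}_{LR}=0$) and verify in turn that each intermediate state satisfies $w_L^\star=w_L$ and $w_R^\star=w_R$. The key preliminary observation is that the source term discretisation was engineered precisely so that $S(\ww_L,\ww_R)=f(w_R)-f(w_L)$ at a local steady state: this is the content of \eqref{eq:source_term_flux}, and it follows for $S^{hu}$ from the derivation culminating in \eqref{eq:Shu at steady state, zR-zL} and \eqref{eq:Shu at steady state + Fr = 1}, and for $S^{hv}$ from \eqref{eq:Shv at steady state} together with the stipulated property $\widetilde{q}=q$ on a local steady state. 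Also, since $h_Lu_L=h_Ru_R=q$, the HLL intermediate water height simplifies to $h^{HLL}=(\lambda_R h_R-\lambda_L h_L)/(\lambda_R-\lambda_L)$.

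Next I would treat the height. Because $\mathcal{E}_{LR}=0$, the definition gives $\Delta_{LR}^h=h_R-h_L$. Plugging into the unclipped expression for $h_L^\star$ yields
\begin{equation*}
h^{HLL}-\frac{\lambda_R}{\lambda_R-\lambda_L}(h_R-h_L)=\frac{\lambda_R h_R-\lambda_L h_L-\lambda_R h_R+\lambda_R h_L}{\lambda_R-\lambda_L}=h_L,
\end{equation*}
and analogously the unclipped $h_R^\star$ equals $h_R$. Since $\delta\le h_L$ and $\delta\le h_R$ by the very definition \eqref{eq:delta}, neither the lower $\max$ nor the upper $\min$ in \eqref{eq:hLstar}--\eqref{eq:hRstar} is active, so $h_L^\star=h_L$ and $h_R^\star=h_R$.

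Then for the discharge I substitute $S^{hu}=f_2(w_R)-f_2(w_L)$ (where $f_2=hu^2+gh^2/2$) into \eqref{eq:qstar}. The flux piece of $(hu)^{HLL}$ cancels against $S^{hu}/(\lambda_R-\lambda_L)$, leaving $q^\star=(\lambda_R h_R u_R-\lambda_L h_L u_L)/(\lambda_R-\lambda_L)=q$, and combined with $h_L^\star=h_L$, $h_R^\star=h_R$ this gives $u_L^\star=u_L$, $u_R^\star=u_R$.

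Finally for the tangential velocity, with $\Delta_{LR}^v=v_R-v_L$, I insert $h_R^\star=h_R$ and $S^{hv}=-\Delta x\, f q$ into \eqref{eq:vLstar}. Expanding $(hv)^{HLL}$ and using the cancellation $\lambda_R h_R v_R-\lambda_R h_R(v_R-v_L)=\lambda_R h_R v_L$, the numerator becomes
\begin{equation*}
v_L(\lambda_R h_R-\lambda_L h_L)-q\bigl((v_R-v_L)+\Delta x\, f\bigr),
\end{equation*}
and the second bracket vanishes by the third equation of \eqref{eq:steady states discretisation}, so $v_L^\star=v_L$; the computation for $v_R^\star=v_R$ is symmetric. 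The main obstacle I anticipate is purely bookkeeping: tracking the definition of $S^{hu}$ through both branches of \eqref{eq:Shu_general_definition} to confirm it collapses to $f_2(w_R)-f_2(w_L)$ on a steady state, and verifying that the positivity cut-off is inactive so that the identities $h_{L,R}^\star=h_{L,R}$ carry over unchanged into the $v$ formulas.
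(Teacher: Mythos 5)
Your proposal is correct and follows essentially the same route as the paper: both arguments hinge on the two facts that $\mathcal{E}_{LR}=0$ collapses $\Delta_{LR}^h$ and $\Delta_{LR}^v$ to $[h]$ and $[v]$, and that the source term was built so that $S(\ww_L,\ww_R)=f(w_R)-f(w_L)$ at a local steady state, after which one checks the cut-off is inactive since $\delta\le\min(h_L,h_R)$. The only (immaterial) difference is that the paper observes $(w_L,w_R)$ solves the defining linear system and invokes uniqueness, whereas you substitute into the explicit solution formulas and verify each component directly.
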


\begin{proof}
 Let us consider a local steady state $(\ww_L,\ww_R)$. To prove the result, we only need to show that $w_L^\star=w_L$ and $w_R^\star=w_R$. First, let us assume the cut-off procedure does not apply.
 Since $(w_L^\star,w_R^\star)$ is defined as the unique solution of the system of equations \eqref{eq:Riemann solver relations consistency 1}, \eqref{eq:Riemann solver relations consistency 2}, \eqref{eq:Riemann solver relations consistency 3}, \eqref{eq:link steady state source term intermediate 1}, \eqref{eq:additional_eq_h}, \eqref{eq:additional_eq_v}, it is sufficient to prove that $(w_L,w_R)$ is solution of this system.
 
 Since we have $\mathcal{E}_{LR}=0$, it is immediate that $(w_L,w_R)$ is solution of \eqref{eq:link steady state source term intermediate 1}, \eqref{eq:additional_eq_h}, \eqref{eq:additional_eq_v}. The approximate solver being weakly consistent and $(\ww_L,\ww_R)$ being a local steady state, equation \eqref{eq:weak_consistency} enforces
 $$ S(\ww_L,\ww_R)=f(w_R)-f(w_L),$$
 so the intermediate state of the HLL solver defined by \eqref{eq:def_HLL} satisfies
 $$(\lambda_R-\lambda_L)w^{HLL}= \lambda_R w_R - \lambda_L w_L - S(\ww_L,\ww_R).$$
 As a consequence, equations \eqref{eq:Riemann solver relations consistency 1}, \eqref{eq:Riemann solver relations consistency 2} and \eqref{eq:Riemann solver relations consistency 3} rewrite
 \begin{align*}
  & \lambda_R h_R^\star - \lambda_L h_L^\star = \lambda_R h_R - \lambda_L h_L, \\
  & \lambda_R h_R^\star u_R^\star - \lambda_L h_L^\star u_L^\star = \lambda_R h_R u_R - \lambda_L h_L u_L, \\
  & \lambda_R h_R^\star v_R^\star - \lambda_L h_L^\star v_L^\star  = \lambda_R h_R v_R - \lambda_L h_L v_L.
 \end{align*}
 We deduce $(w_L,w_R)$ is a solution of these equations and thus $w_L^\star=w_L$ and $w_R^\star=w_R$.
 
 Finally, we state that the cut-off procedure cannot apply in this case. Indeed, thanks to the definition \eqref{eq:delta}, the intermediate fluid heights computed before the cut-off procedure satisfy $h_L^\star=h_L\ge\delta$ and $h_R^\star=h_R\ge\delta$.
\end{proof}

The approximate Riemann solver $\WR$ thus satisfies all the required properties.

\subsection{The final scheme}\label{sec:the_final_scheme}

We summarize in this section the full scheme and its properties.

\begin{theorem}\label{thm:first-order_scheme}
	The approximate Riemann solver \eqref{eq:ARS} where the intermediate states are given by \eqref{eq:link steady state source term intermediate 1}, \eqref{eq:qstar}, \eqref{eq:hLstar}, \eqref{eq:hRstar}, \eqref{eq:vLstar} and \eqref{eq:vRstar} leads to a Godunov-type scheme that can be written under the form \eqref{eq:godunov_type_scheme}. The numerical flux
	$$F(\ww_L,\ww_R)= \left( F^{h}(\ww_L,\ww_R), F^{hu}(\ww_L,\ww_R), F^{hv}(\ww_L,\ww_R) \right)^T,$$
	is given by 
	\begin{align*}
		& F^h(\ww_L,\ww_R)  = \overline{hu} + \frac{\lambda_R}{2}(h_R^\star - h_R) + \frac{\lambda_L}{2}(h_L^\star-h_L),\\
		& F^{hu}(\ww_L,\ww_R) =\overline{hu^2+\frac{gh^2}{2}} + \frac{\lambda_R}{2}(h_R^\star u_R^\star - h_Ru_R) +\frac{\lambda_L}{2}(h_L^\star u_L^\star - h_L u_L), \\
		& F^{hv}(\ww_L,\ww_R) = \overline{huv} + \frac{\lambda_R}{2}(h_R^\star v_R^\star - h_Rv_R) + \frac{\lambda_L}{2} (h_L^\star v_L^\star-h_Lv_L),
	\end{align*}
	and the numerical source term
	$$S(\ww_L,\ww_R)= \left( 0, S^{hu}(\ww_L,\ww_R), S^{hv}(\ww_L,\ww_R) \right)^T$$
	is defined by \eqref{eq:Shu_general_definition} and \eqref{eq:Shv_general_definition}. \\
	Under the CFL restriction \eqref{eq:CFL_order1} and if the speeds $\lambda_L$ and $\lambda_R$ are chosen accordingly to \eqref{eq:lambda}, this scheme is fully well-balanced and preserves the positivity of $h$.
\end{theorem}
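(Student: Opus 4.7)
My plan is to assemble the claim from three ingredients: (i) the flux formulas follow from a direct integration of the ARS in the general Godunov-type flux identity \eqref{eq:numerical flux general def}, (ii) the fully well-balanced property transfers from $\WR$ to the scheme via \cref{lem:WB}, and (iii) positivity of $h$ follows from the robustness of $\WR$ via \cref{lem:robust}.

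For the flux formulas, I would start from \eqref{eq:numerical flux general def} and split the two half-cell integrals according to the four-state structure \eqref{eq:ARS}. Under the CFL condition \eqref{eq:CFL_order1} and the bounds \eqref{eq:lambda} on $\lambda_L,\lambda_R$, the fan fits inside $(-\Delta x/2, \Delta x/2)$, so
\begin{equation*}
\int_0^{\Delta x/2} \WR\!\left(\tfrac{x}{\Delta t}\right)dx = \lambda_R \Delta t\, w_R^\star + \left(\tfrac{\Delta x}{2}-\lambda_R \Delta t\right) w_R,
\end{equation*}
\begin{equation*}
\int_{-\Delta x/2}^0 \WR\!\left(\tfrac{x}{\Delta t}\right)dx = -\lambda_L \Delta t\, w_L^\star + \left(\tfrac{\Delta x}{2}+\lambda_L \Delta t\right) w_L.
\end{equation*}
Substituting into \eqref{eq:numerical flux general def}, the $\tfrac{\Delta x}{4\Delta t}(w_R-w_L)$ term cancels with its counterpart produced by the integrals, and one obtains
\begin{equation*}
F(\ww_L,\ww_R) = \frac{f(w_L)+f(w_R)}{2} + \frac{\lambda_R}{2}(w_R^\star - w_R) + \frac{\lambda_L}{2}(w_L^\star - w_L).
\end{equation*}
Reading off each component and using $h^\star u^\star = q^\star$ on both sides yields exactly the three formulas announced for $F^h, F^{hu}, F^{hv}$.

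The well-balanced property is then immediate: by construction in \cref{sec:numerical source term} the numerical source term $S$ given by \eqref{eq:Shu_general_definition}--\eqref{eq:Shv_general_definition} is consistent in the sense of \cref{def:source_term_consistency} (this is \cref{lem:numerical source term definition}), and the ARS built in \cref{sec:ARS} is well-balanced by \cref{lem:ARS_WB}. \cref{lem:WB} therefore provides that the associated Godunov-type scheme \eqref{eq:godunov_type_scheme} is itself well-balanced.

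For positivity, by \cref{lem:robust} it suffices to verify \eqref{eq:RS_robustness}, i.e.\ that every constant state of $\WR$ has positive first component whenever $h_L,h_R>0$. The outer states $w_L,w_R$ are positive by hypothesis; for the intermediate states, the condition \eqref{eq:lambda} on $\lambda_L,\lambda_R$ ensures $h^{HLL}>0$, hence $\delta>0$ in \eqref{eq:delta}, and the min--max cut-off \eqref{eq:hLstar}--\eqref{eq:hRstar} forces $h_L^\star,h_R^\star \geq \delta > 0$ while still respecting \eqref{eq:Riemann solver relations consistency 1}, so the weak consistency is not destroyed. The main subtlety I anticipate is precisely checking that the cut-off keeps both intermediate heights simultaneously in the interval $[\delta, \tfrac{\lambda_R-\lambda_L}{|\lambda_{L/R}|}h^{HLL} - \cdots]$ prescribed by \eqref{eq:hLstar}--\eqref{eq:hRstar}, but this is a short case analysis on the sign of $h^\star - \delta$ and on which bound is active, and it mirrors the argument of \cite{53Audusse_Chalons_Ung,34DansacBerthonClainFoucher2016,46Berthon_Blood_flow}. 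Once \eqref{eq:RS_robustness} is in hand, \cref{lem:robust} concludes the positivity part and the theorem follows.
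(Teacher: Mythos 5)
Your proposal is correct and follows essentially the same route as the paper's proof: the flux formulas come from evaluating the half-cell integrals of \eqref{eq:ARS} in \eqref{eq:numerical flux general def} (which the paper dismisses as a ``straightforward computation'' and you carry out explicitly, correctly), positivity comes from $h^{HLL}>0$ plus the cut-off and \cref{lem:robust}, and well-balancing comes from \cref{lem:ARS_WB} combined with \cref{lem:WB}. The only difference is that you spell out details the paper leaves implicit; the cut-off compatibility you flag as a possible subtlety is already handled in the construction of \eqref{eq:hLstar}--\eqref{eq:hRstar} in \cref{sec:ARS}, so nothing further is needed.
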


\begin{proof}
 The expression of the numerical flux is obtained from a straightforward computation in \eqref{eq:numerical flux general def}.

 Assume $(w_L,w_R)$ are in $\Omega$. Since $h^{HLL} >0 $, the cut-off procedure ensures $h_L^\star > 0$ and $h_R^\star > 0$. Thus the variable $h$ remains positive in the approximate Riemann solver. According to \cref{lem:robust}, the scheme preserves the positivity of $h$.
 
 The well-balanced property of the scheme is a direct consequence of \cref{lem:WB,lem:ARS_WB}.
\end{proof}

\section{Second-order scheme} \label{sec:secondorder}
In this section, we propose to improve the scheme precision using the MUSCL method. Our goal is to build a second-order scheme in space that preserves the good properties of the first-order one, namely the positivity of $h$ and the well-balanced property. The second-order in time is obtained with the usual Runge-Kutta method. We do not describe it here, but the reader can refer to \cite{shu1988efficient,gottlieb1998total,08Ordre2Berthon2001}.

We start by a description of the standard MUSCL method, and we explain why it is not adapted to get the fully well-balanced property for the RSW system. Indeed, no conservative reconstruction can preserve the complex structure of all the steady states defined by \eqref{eq:steadystates}. We explain in \cref{sec:Fully well-balanced recovering} how to recover the fully well-balanced property by adapting the ideas proposed in \cite{47Berthon_Dansac_Manning_Friction} and \cite{46Berthon_Blood_flow} to our generalised MUSCL scheme.

% To explain our choice, we first describe the classical MUCL method. Then, we detail a generalisation that suffers of a lack of informations on source term for smooth source terms. before giving an example of generalisation that suffers from a loss of information.
%Then, we propose an other generalisation, based on a convex combination that 
%
%
%We discuss the choice of the generalisation, by considering a simple example 
%
% Then, we propose to use a convex combination
%
%
%Some authors as [Victor] propose a generalisation for system with source terms.  Then, we propose an other generalisation, based on a convex combination, that has not this problem.

Up to this point, for the sake of conciseness, we neither mentioned explicitly the dependence on $\dx$ in the numerical fluxes and the source terms, nor in the definition of local steady states. However in the following, we will consider half-cells, which will impose to make appear these dependencies, in particular to determine if the scheme is fully well-balanced. It will also be useful to consider the $\dx$ that appears in the approximate Riemann solver $\WR$ and the $\dx$ that appears in the numerical scheme definition \eqref{eq:godunov_type_scheme} as two separated parameters. For $d > 0$ the approximate Riemann solver is given by \begin{equation*}
\WR\left(\frac{x}{t},\ww_L,\ww_R ,d\right) = 
 \begin{cases}
 w_L  &  \text{ if } \frac{x}{t}< \lambda_L, \\
 w_L^\star(d) & \text{ if }  \lambda_L<\frac{x}{t}<0, \\
 w_R^\star(d) & \text{ if }  0<\frac{x}{t}<\lambda_R, \\
 w_R   & \text{ if }  \frac{x}{t}>\lambda_R.
\end{cases}
\end{equation*}
According to \cref{sec:principle}, the resulting Godunov-type scheme writes \begin{multline}\label{eq:godunov_type_scheme_d}
	w_i^{n+1} = w_i^n - \frac{\dt}{\dx}\left( F \left(\widetilde{w}_i^n,\widetilde{w}_{i+1}^n,d \right) - F \left(\widetilde{w}_{i-1}^n,\widetilde{w}_{i}^n, d \right) \right) \\
	+ \frac{\dt}{2\dx} \left( S \left(\widetilde{w}_{i-1}^n,\widetilde{w}_i^n,d \right) + S \left(\widetilde{w}_i^n,\widetilde{w}_{i+1}^n,d \right) \right),
\end{multline}
provided the CFL condition \eqref{eq:CFL_order1} is satisfied. Notice that for $d = \dx$, we recover the fully well-balanced scheme derived in \cref{sec:approximate_Riemann_solver}. Moreover, we establish the following lemma, that will be useful for the forthcoming proof.

\begin{lemma} \label{lem:schema_d_robust}
	Under the CFL condition \eqref{eq:CFL_order1} and if the approximate Riemann solver speed waves $\lambda_L$ and $\lambda_R$ satisfy \eqref{eq:lambda}, then the Godunov-type scheme \eqref{eq:godunov_type_scheme_d} preserves the positivity of $h$, for all $d > 0$.
\end{lemma}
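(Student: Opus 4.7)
The plan is to carry over the convex-combination argument used in the proof of \cref{lem:robust} to the scheme \eqref{eq:godunov_type_scheme_d}, exploiting the fact that the wave speeds $\lambda_L,\lambda_R$ are independent of $d$ while only the intermediate heights depend on $d$. Since $S^h\equiv 0$, the $h$-component of \eqref{eq:godunov_type_scheme_d} reduces to
\[
h_i^{n+1}=h_i^n-\frac{\dt}{\dx}\bigl(F^h(\ww_i,\ww_{i+1},d)-F^h(\ww_{i-1},\ww_i,d)\bigr),
\]
with $F^h$ given by \cref{thm:first-order_scheme} but evaluated at parameter $d$.

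The first step is to rewrite $F^h$ in standard HLL form. The consistency identity \eqref{eq:Riemann solver relations consistency 1} is part of the very definition of $\WR$ and therefore holds for every $d>0$. Combining it with the expression of $F^h$ from \cref{thm:first-order_scheme} yields the two equivalent identities
\[
F^h(\ww_L,\ww_R,d) = h_Lu_L + \lambda_L\bigl(h_L^\star(d)-h_L\bigr) = h_Ru_R + \lambda_R\bigl(h_R^\star(d)-h_R\bigr).
\]
Choosing the second form at the interface $i-1/2$ and the first form at $i+1/2$, so that $h_i^n$ factors out, a short computation then produces the representation
\[
h_i^{n+1} = \left(1-\frac{\dt}{\dx}\bigl(\lambda_R^{i-1/2}-\lambda_L^{i+1/2}\bigr)\right)h_i^n + \frac{\dt\,\lambda_R^{i-1/2}}{\dx}\,h_{R,i-1/2}^\star(d) - \frac{\dt\,\lambda_L^{i+1/2}}{\dx}\,h_{L,i+1/2}^\star(d),
\]
where $\lambda_{L,R}^{i\pm 1/2}$ and $h_{L,R,i\pm 1/2}^\star(d)$ denote the wave speeds and intermediate heights produced by the Riemann problem at $x_{i\pm 1/2}$.

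The second step is to check that this is a convex combination of positive numbers. By \eqref{eq:lambda} we have $\lambda_R^{i-1/2}>0$ and $-\lambda_L^{i+1/2}>0$, and the CFL restriction \eqref{eq:CFL_order1} ensures that the coefficient of $h_i^n$ is nonnegative; the three weights sum to $1$ by construction. The cut-off procedure \eqref{eq:hLstar}--\eqref{eq:hRstar}, which is well defined because \eqref{eq:lambda} forces $h^{HLL}>0$ and hence $\delta>0$ in \eqref{eq:delta}, guarantees $h_L^\star(d),h_R^\star(d)\geq\delta>0$ for \emph{any} $d>0$. Combined with $h_i^n>0$ this yields $h_i^{n+1}>0$.

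The only mild obstacle is keeping the separate roles of $d$ and $\dx$ in mind: $d$ enters only through the intermediate states, while $\dx$ governs the convex-combination weights via the CFL condition. Once the HLL-type simplification of $F^h$ is written down, the rest of the argument mirrors \cref{lem:robust}.
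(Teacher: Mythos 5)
Your proof is correct and follows essentially the same route as the paper: the paper likewise observes that the cut-off procedure yields $h_L^\star,h_R^\star\ge\delta>0$ independently of $d$ (since \eqref{eq:lambda} only controls $h^{HLL}$ and $\delta$), and then invokes \cref{lem:robust}, whose averaging argument is exactly the convex combination you write out explicitly via the two HLL-type forms of $F^h$. The only difference is presentational — you unroll the integral-average argument of \cref{lem:robust} into an algebraic identity on the fluxes — and your verification that the consistency relation \eqref{eq:Riemann solver relations consistency 1} survives the cut-off is consistent with the paper's own remark to that effect.
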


\begin{proof}
	Independently of the parameter $d$, the cut-off procedure leads to positive intermediate states $h_L^\star$ and $h_R^\star$ according to definition \eqref{eq:hLstar}-\eqref{eq:hRstar}, since the speed waves $\lambda_L$ and $\lambda_R$ satisfy the condition \eqref{eq:lambda}. Then, we apply the \cref{lem:robust} to conclude the scheme \eqref{eq:godunov_type_scheme_d} preserves the positivity of $h$ for all $d > 0$.
\end{proof}

% All notations have been introduced. It is time to develop the construction of our second-order scheme. We start by recalling the standard MUSCL method.

\subsection{Standard MUSCL method}\label{sec:standard_MUSCL_method}

The main idea of the MUSCL method is to reach second-order by considering a linear reconstruction of the solution on each cell, instead of a constant one. We recall here the standard reconstruction procedure.

Starting from a piecewise approximation at time $t^n$, 
$$\widetilde{w}_{\Delta x}(x,t^n) = \ww_i^n \text{ if } x \in K_i,$$
we reconstruct a linear approximation on each cell 
$$\widehat{w}_{\Delta x}(x,t^n) = \sigma_i^n(\widetilde{w}) (x-x_i) + \ww_i^n, \text{ if } x \in K_i, $$
where $\sigma_i^n(\widetilde{w})$ is a slope vector to determine. Let us emphasize that this procedure includes the topography.
The reconstructed states correspond to the value of $\widehat{w}_{\Delta x}$ at the interfaces of each cell and read as 
\begin{equation}\label{eq:reconstruction}
\widetilde{w}_i^{n,\pm} = \widetilde{w}_i^n \pm \frac \dx 2 \sigma_i^n(\widetilde{w}),
\end{equation}

To avoid spurious oscillations, it is well-known that a limitation procedure must be applied to the slopes.
In this paper, we consider the minmod limiter function defined by
$$\mathrm{minmod}(\sigma_L,\sigma_R) =
\begin{cases}
\min(\sigma_L,\sigma_R) & \text{if } \sigma_L>0 \text{ and } \sigma_R > 0, \\
\max(\sigma_L,\sigma_R) & \text{if } \sigma_L<0 \text{ and } \sigma_R < 0, \\
0 & \text{otherwise}.
\end{cases}
$$
Then the slope vector is defined by $\sigma_i^n (\widetilde{w}) = \mathrm{minmod} \left( \frac{\widetilde{w}_i^n - \widetilde{w}_{i-1}^n}{\dx}, \frac{\widetilde{w}_{i+1}^n - \widetilde{w}_i^n}{\dx} \right)$. Other limiters can be considered, see \cite{14Toro2009,53Leveque2003} for instance. As usual, we enforce an additional limitation procedure on the first component of the slope vector $\sigma_i^n(\widetilde{w})$ in order to ensure that the fluid heights $h_i^{n,\pm}$ remain positive.

The standard MUSCL extension is obtained as follows.
For a first-order scheme under the form \eqref{eq:godunov_type_scheme},
% \begin{equation} \label{eq:first-order scheme}
% w_i^{n+1} = w_i^n - \frac{\dt}{\dx} (F(\widetilde{w}_i^n,\widetilde{w}_{i+1}^n)- F(\widetilde{w}_{i-1}^n,\widetilde{w}_{i}^n) ) + \frac{\dt}{2\dx} (S(\widetilde{w}_i^n,\widetilde{w}_{i+1}^n) + S(\widetilde{w}_{i-1}^n,\widetilde{w}_{i}^n) ).
% \end{equation}
the second-order scheme is defined by
\begin{multline} \label{eq:second-order scheme not WB}
	w_i^{n+1} = w_i^n - \frac{\dt}{\dx} (F(\widetilde{w}_i^{n,+},\widetilde{w}_{i+1}^{n,-},d) - F(\widetilde{w}_{i-1}^{n,+},\widetilde{w}_i^{n,-},d) ) \\
	+ \frac{\dt}{2\dx} ( S(\widetilde{w}_{i-1}^{n,+},\widetilde{w}_i^{n,-},d) + S_{c}(\widetilde{w}_i^{n,-},\widetilde{w}_i^{n,+},d) + S(\widetilde{w}_i^{n,+},\widetilde{w}_{i+1}^{n,-},d) ),
\end{multline}
where $S_{c}(\widetilde{w}_L,\widetilde{w}_R,d)$ is a centered source term, added to take into account the source term when its jumps at interfaces are small (see \cite{15BouchutLIVRE}). Several possibilities exist to define it.
% For more details about this method, the reader can refer to \cite{08Ordre2Berthon2001}.
In the present work, it will be determined naturally by considering the MUSCL scheme \eqref{eq:second-order scheme not WB} as a convex combination between two first-order schemes applied on the reconstructed states and on half-cells, as described in \cref{fig:MUSCL}. More precisely, we define
\begin{multline*}
	w_i^{n+1,+} = w_i^{n,+} - \frac{\dt}{\dx/2} \left( F \left(\widetilde{w}_i^{n,+},\widetilde{w}_{i+1}^{n,-},\frac \dx 2 \right) - F \left(\widetilde{w}_{i}^{n,-},\widetilde{w}_i^{n,+}, \frac \dx 2 \right) \right) \\
	+ \frac{\dt}{\dx} \left( S \left(\widetilde{w}_{i}^{n,-},\widetilde{w}_i^{n,+},\frac \dx 2\right) + S \left(\widetilde{w}_i^{n,+},\widetilde{w}_{i+1}^{n,-},\frac \dx 2 \right) \right),
\end{multline*}
and 
\begin{multline*}
	w_i^{n+1,-} = w_i^{n,-} - \frac{\dt}{\dx/2} \left( F \left(\widetilde{w}_i^{n,-},\widetilde{w}_{i}^{n,+},\frac \dx 2 \right) - F \left(\widetilde{w}_{i-1}^{n,+},\widetilde{w}_{i}^{n,-}, \frac \dx 2 \right) \right) \\
	+ \frac{\dt}{\dx} \left( S \left(\widetilde{w}_{i-1}^{n,+},\widetilde{w}_i^{n,-},\frac \dx 2\right) + S \left(\widetilde{w}_i^{n,-},\widetilde{w}_{i}^{n,+},\frac \dx 2 \right) \right).
\end{multline*}
Taking the average of these two states, we obtain
\begin{multline}\label{eq:second-order scheme cvx}
	w_i^{n+1} = \frac{w_i^{n,-}+w_i^{n,+}}{2} - \frac{\dt}{\dx}\left( F \left(\widetilde{w}_i^{n,+},\widetilde{w}_{i+1}^{n,-},\frac \dx 2 \right) - F \left(\widetilde{w}_{i-1}^{n,+},\widetilde{w}_{i}^{n,-}, \frac \dx 2 \right) \right) \\
	+ \frac{\dt}{2\dx} \left( S \left(\widetilde{w}_{i-1}^{n,+},\widetilde{w}_i^{n,-},\frac \dx 2\right) + 2S \left(\widetilde{w}_{i}^{n,-},\widetilde{w}_i^{n,+},\frac \dx 2 \right) + S \left(\widetilde{w}_i^{n,+},\widetilde{w}_{i+1}^{n,-},\frac \dx 2 \right) \right).
\end{multline}
Assuming the reconstruction is conservative, namely $w_i^n=\frac{w_i^{n,-}+w_i^{n,+}}{2}$, we notice that the scheme \eqref{eq:second-order scheme cvx} can be written under the form \eqref{eq:second-order scheme not WB} with $d = \frac{\dx}{2}$ and by defining the centered source term as
$$ S_{c}(\widetilde{w}_i^{n,-},\widetilde{w}_i^{n,+},d) = 2 S\left(\widetilde{w}_i^{n,-},\widetilde{w}_i^{n,+},\frac{\Delta x}{2}\right).$$

An advantage of this procedure is that the MUSCL scheme \eqref{eq:second-order scheme cvx} automatically preserves the positivity of $h$ as soon as the associated first-order scheme does, up to a half CFL restriction. 

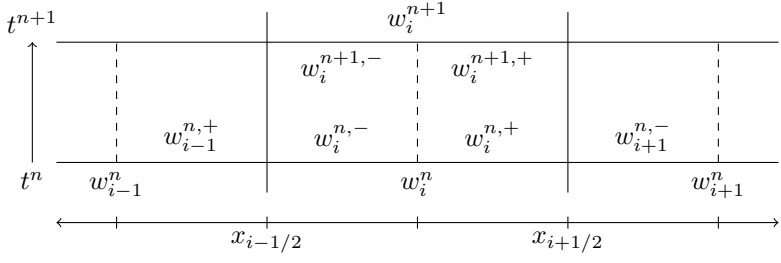
\begin{figure}
	\centering
	\begin{tikzpicture}[scale=0.8]
	
	%Axes
	\draw[<->] (-6,0) -- (6,0);
	\draw[->] (-6.4,1) -- (-6.4,3);
	\draw[-] (-2.5,0.5) -- (-2.5,3.5);
	\draw[-] (2.5,0.5) -- (2.5,3.5);
	\draw[dashed] (0,1) -- (0,3);
	\draw[dashed] (-5,1) -- (-5,3);
	\draw[dashed] (5,1) -- (5,3);
	
	\draw[-] (2.5,-0.1) -- (2.5,0.1);
	\draw[-] (-2.5,-0.1) -- (-2.5,0.1);
	\draw[-] (5,-0.1) -- (5,0.1);
	\draw[-] (-5,-0.1) -- (-5,0.1);
	\draw[-] (0,-0.1) -- (0,0.1);
	
	\draw[-] (-6,1) -- (6,1);
	\draw[-] (-6,3) -- (6,3);

	%Etiquettes
	\draw[black] (-2.5,0) node [below] {$x_{i-1/2}$};
	\draw[black] (2.5,0) node [below] {$x_{i+1/2}$};
	
	\draw[black] (-6.4,1) node [below] {$t^n$};
	%\draw[black] (-7.5,1.25) node [above] {$\mathrm{R}$};
	%\draw[black] (-7.5,2.25) node [above] {$\mathrm{S}_1$};
	\draw[black] (-6.4,3) node [above] {$t^{n+1}$};

	\draw[black] (-5,1) node [below] {$w_{i-1}^n$};
	\draw[black] (0,1) node [below] {$w_{i}^n$};
	\draw[black] (5,1) node [below] {$w_{i+1}^n$};
	
	\draw[black] (-3.75,1) node [above] {$w_{i-1}^{n,+}$};
	\draw[black] (-1.25,1) node [above] {$w_{i}^{n,-}$};
	\draw[black] (1.25,1) node [above] {$w_{i}^{n,+}$};
	\draw[black] (3.75,1) node [above] {$w_{i+1}^{n,-}$};
	
	%\draw[black] (-3.75,3.5) node [below] {$w_{i-1}^{n+1,+}$};
	\draw[black] (-1.25,3) node [below] {$w_{i}^{n+1,-}$};
	\draw[black] (1.25,3) node [below] {$w_{i}^{n+1,+}$};
	%\draw[black] (3.75,3.5) node [below] {$w_{i+1}^{n+1,-}$};
	
	\draw[black] (0,3) node [above] {$w_{i}^{n+1}$};
	
	\end{tikzpicture} 
	\caption{MUCL second-order scheme}
	\label{fig:MUSCL}
\end{figure}

% We deduce other properties from this point of view. On the one hand, the CFL condition for the second-order scheme is half of the CFL \eqref{eq:CFL_order1} since the first-order scheme is applied on half-cells. Under this condition and with a suitable reconstruction, the second-order scheme preserves the positivity of $h$ since its a convex combination of positivity-preserving first-order schemes.

However, the well-balanced property is not reached as easily. Indeed, in order for the MUSCL scheme \eqref{eq:second-order scheme cvx} to be well-balanced, the reconstruction would have to satisfy for any discrete steady state $(\widetilde{w}_i^n)_{i\in\Z}$, $$\mathcal{E}(\widetilde{w}_i^{n,+},\widetilde{w}_{i+1}^{n,-},\dx/2) = \mathcal{E}(\widetilde{w}_i^{n,-},\widetilde{w}_i^{n,+},\dx/2)=0, \text{ for all } i \in \Z.$$ 
Unfortunately, we cannot provide such a reconstruction in our case. Indeed, we have to reconstruct four variables, including the conservative ones $h,hu,hv$ which leaves only one free variable to reconstruct. Moreover, according to definition \eqref{eq:steady states discretisation}, the moving steady states involve three expressions among which two are not conservative quantities. Therefore, we would need to reconstruct two free variables in order to preserve steady states. 
%, which means we need two free variables to preserve the two other relations. However we have to reconstruct four variables, including the conservative ones $h,hu$ and $hv$, which leaves only one free variable to reconstruct.

We propose in the next section, to modify the MUSCL method and the reconstruction to get around that problem and recover a fully well-balanced second-order scheme.

\subsection{Fully well-balanced recovering}\label{sec:Fully well-balanced recovering}

% The fully-well balanced property cannot be recovered with the standard MUSCL method.
We suggest a modification based on an idea introduced in \cite{47Berthon_Dansac_Manning_Friction} and \cite{46Berthon_Blood_flow}. The main principle is to consider the second-order scheme \eqref{eq:second-order scheme not WB} far from steady states and recover the first-order scheme \eqref{eq:godunov_type_scheme} near a steady state, which guarantee the scheme to be well-balanced.
% In this way, both of positivity-preserving and well-balanced properties will be satisfied by scheme.
The difficulty lies in the definition of being far from/close to a steady state.

For this purpose, we consider a smooth increasing function $\theta$, valued in $[0,1]$ and such that $\theta(0)=0$ and $\theta(x)\approx 1$ far from $0$. We choose the following function
$$\theta(x) = \frac{x^2}{x^2 + \dx^2}.$$
We set $\theta_i^n = \theta(\mathcal{E}_i^n)$, where $\mathcal{E}_i^n = \mathcal{E}(\widetilde{w}_{i-1}^n,\widetilde{w}_i^n,\dx) + \mathcal{E}(\widetilde{w}_i^n,\widetilde{w}_{i+1}^n,\dx)$ detects if both couples $(\widetilde{w}_{i-1}^n,\widetilde{w}_i^n)$ and $(\widetilde{w}_i^n,\widetilde{w}_{i+1}^n)$ are local steady states simultaneously.

The reconstructed states are now defined as a convex combination between the linear reconstructed states and the first-order states
\begin{equation}\label{eq:second-order_reconstruction}
	\widetilde{w}_i^{n,\pm} = (1-\theta_i^n) \widetilde{w}_i^n + \theta_i^n \left( \widetilde{w}_i^n \pm \frac \dx 2 \sigma_i^n(\widetilde{w}) \right) = \widetilde{w}_i^n \pm \theta_i^n \frac \dx 2 \sigma_i^n(\widetilde{w})
\end{equation}
This reconstruction amounts to consider an additional limitation that involves the steady state detector $\theta_i^n$.
For a discrete steady state, we have $\theta_i^n = 0$ and we recover the first-order states $\widetilde{w}_i^{n,\pm} = \widetilde{w}_i^n$. Far from steady state and for a smooth solution, a mere computation shows that $\widetilde{w}_i^{\pm} = \widetilde{w}_i^n + \frac{\dx}{2} \sigma_i^n(\widetilde{w}) + O(\dx^3)$ when $\dx$ tends to $0$, which means the perturbation added to the usual second-order reconstruction is small enough to recover the seeking order.

Next, we define the scheme as 
\begin{multline}\label{eq:second-order scheme WB}
	w_i^{n+1} = w_i^n - \frac{\dt}{\dx}\left( F \left(\widetilde{w}_i^{n,+},\widetilde{w}_{i+1}^{n,-},\dx_1 \right) - F \left(\widetilde{w}_{i-1}^{n,+},\widetilde{w}_{i}^{n,-}, \dx_1 \right) \right) \\
	+ \frac{\dt}{2\dx} \left( S \left(\widetilde{w}_{i-1}^{n,+},\widetilde{w}_i^{n,-},\dx_1\right) + 2S \left(\widetilde{w}_i^{n,-},\widetilde{w}_{i}^{n,+},\dx_2\right) + S \left(\widetilde{w}_i^{n,+},\widetilde{w}_{i+1}^{n,-},\dx_1 \right) \right),
\end{multline}
where the coefficients $\dx_1$ and $\dx_2$ have to be adapted, depending if we apply the first or second-order scheme. Far from steady states we need $\dx_1 = \dx_2 = \frac \dx 2$ in \eqref{eq:second-order scheme WB} to recover the second-order scheme \eqref{eq:second-order scheme not WB}. For a discrete steady state, the scheme reads as
\begin{multline}
	w_i^{n+1} = w_i^n - \frac{\dt}{\dx}\left( F \left(\widetilde{w}_i^n,\widetilde{w}_{i+1}^n,\dx_1 \right) - F \left(\widetilde{w}_{i-1}^n,\widetilde{w}_{i}^n, \dx_1 \right) \right) \\
	+ \frac{\dt}{2\dx} \left( S \left(\widetilde{w}_{i-1}^n,\widetilde{w}_i^n,\dx_1\right) + 2S \left(\widetilde{w}_i^n,\widetilde{w}_{i}^n,\dx_2\right) + S \left(\widetilde{w}_i^n,\widetilde{w}_{i+1}^n,\dx_1 \right) \right).
\end{multline}
We notice that $S(\widetilde{w},\widetilde{w},0) = 0$ according to the source term consistency \eqref{eq:general_source_consistency}. Therefore, we have to set $\dx_1 = \dx$ and $\dx_2 = 0$ to recover the first-order scheme \eqref{eq:godunov_type_scheme}.

In order to satisfy both these requirements, coefficients $\dx_1$ and $\dx_2$ are set as convex combinations as follows
\begin{equation} \label{eq:def_dx1_dx2}
\dx_1 = \dx \left( 1-\frac{\theta_i^n}{2} \right)\quad\text{and}\quad \dx_2 = \theta_i^n \frac \dx 2.
\end{equation}

We prove in the following theorem that the resulting second-order scheme is fully well-balanced, and that it preserves the positivity of $h$ under the classical second-order CFL restriction.

\begin{theorem}\label{thm:second-order_scheme} %We consider a fully well-balanced approximate Riemann solver $\WR(\frac{x}{t},\ww_L,\ww_R,d)$ that satisfies the following robustness property : 	
%	\be \label{eq:robust_ARS_d}
%	\forall d \in \R, \, \forall (\ww_L,\ww_R) \in \widetilde{\Omega}^2,\, \forall \xi \in \R,\,  \WR\left(\xi,\ww_L,\ww_R,d\right) \in \Omega.
%	\ee

%We denote $\lambda(\ww_L,\ww_R)$ the maximum speed of the waves that appear in the approximate Riemann solver. We consider the first-order scheme written in the form \eqref{eq:godunov_type_scheme_d}.
	Under the CFL condition 
	$$ \frac{\dt}{\dx} \max_{i \in \Z} \left( \vert \lambda^\pm(\widetilde{w}_i^{n,-},\widetilde{w}_i^{n,+})\vert, \vert \lambda^\pm(\widetilde{w}_i^{n,+},\widetilde{w}_{i+1}^{n,-}) \vert \right) \leq \frac{1}{4} ,$$
	and if the speeds $\lambda_L$ and $\lambda_R$ of the approximate Riemann solver satisfy the condition \eqref{eq:lambda}, then the second-order scheme \eqref{eq:second-order_reconstruction}-\eqref{eq:second-order scheme WB}-\eqref{eq:def_dx1_dx2} is fully well-balanced and preserves the positivity of $h$.
\end{theorem}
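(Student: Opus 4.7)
The plan is to establish the two properties separately, leveraging the structure the scheme \eqref{eq:second-order scheme WB} inherits from its reconstruction \eqref{eq:second-order_reconstruction} and from the first-order solver of \cref{thm:first-order_scheme}.

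For the well-balanced property, I would assume that $(\widetilde{w}_i^n)_{i\in\Z}$ is a discrete steady state. By \cref{def:approx at steady state} one has $\mathcal{E}(\widetilde{w}_{i-1}^n,\widetilde{w}_i^n,\dx)=\mathcal{E}(\widetilde{w}_i^n,\widetilde{w}_{i+1}^n,\dx)=0$ for every $i$, hence $\mathcal{E}_i^n=0$ and the detector collapses to $\theta_i^n=\theta(0)=0$ on every cell. The reconstruction \eqref{eq:second-order_reconstruction} then gives $\widetilde{w}_i^{n,\pm}=\widetilde{w}_i^n$, and the choice \eqref{eq:def_dx1_dx2} yields $\dx_1=\dx$ and $\dx_2=0$. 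The in-cell centered source term $S(\widetilde{w}_i^n,\widetilde{w}_i^n,0)$ vanishes by the consistency identity \eqref{eq:general_source_consistency} applied with $z_L=z_R=z_i$ and with vanishing space step. Substituting all this into \eqref{eq:second-order scheme WB} reduces it exactly to the first-order update \eqref{eq:godunov_type_scheme} with Riemann-solver parameter $\dx$, and $w_i^{n+1}=w_i^n$ follows from the well-balanced part of \cref{thm:first-order_scheme}.

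For the positivity I would mimic the convex-combination argument of \cref{sec:standard_MUSCL_method} and write $w_i^{n+1}=\tfrac{1}{2}(w_i^{n+1,-}+w_i^{n+1,+})$ with
\begin{align*}
w_i^{n+1,+} &= w_i^{n,+}-\frac{\dt}{\dx/2}\bigl(F(\widetilde{w}_i^{n,+},\widetilde{w}_{i+1}^{n,-},\dx_1)-F(\widetilde{w}_i^{n,-},\widetilde{w}_i^{n,+},\dx_2)\bigr)\\
&\quad+\frac{\dt}{\dx}\bigl(S(\widetilde{w}_i^{n,-},\widetilde{w}_i^{n,+},\dx_2)+S(\widetilde{w}_i^{n,+},\widetilde{w}_{i+1}^{n,-},\dx_1)\bigr),
\end{align*}
and a symmetric formula for $w_i^{n+1,-}$. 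A short computation, using that the reconstruction \eqref{eq:second-order_reconstruction} is conservative ($\tfrac{1}{2}(\widetilde{w}_i^{n,-}+\widetilde{w}_i^{n,+})=\widetilde{w}_i^n$), shows that the half-sum of the two sub-updates reproduces \eqref{eq:second-order scheme WB}. Each $w_i^{n+1,\pm}$ is then a Godunov-type update on a half-cell of width $\dx/2$, so the positivity analysis of \cref{lem:schema_d_robust} applies, requiring $\dt/(\dx/2)\,\max|\lambda^\pm|\leq 1/2$, which is exactly the stated CFL. As in the standard MUSCL case, an additional limitation on the first component of $\sigma_i^n(\widetilde{w})$ enforces $h_i^{n,\pm}>0$; the extra factor $\theta_i^n\in[0,1]$ in \eqref{eq:second-order_reconstruction} only shrinks the slope, so that limitation still works. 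The half-sum of the two positive sub-updates is then positive as well.

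The step I expect to require the most care is checking the sub-update decomposition, since the Riemann solvers at the two interfaces of a given half-cell carry \emph{different} parameters $\dx_1$ and $\dx_2$. I would reinspect the proofs of \cref{lem:robust,lem:schema_d_robust} to confirm that positivity only uses the cut-off formulas \eqref{eq:hLstar}--\eqref{eq:hRstar} and the speeds $\lambda_L,\lambda_R$, all of which are independent of the parameter $d$, so that the convex-combination argument of \cref{lem:robust} transfers directly to the present setting with mixed parameters.
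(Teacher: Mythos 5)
Your proposal is correct and follows essentially the same route as the paper: the well-balanced property via $\theta_i^n=0$ collapsing the scheme to the first-order update, and positivity via the half-cell convex-combination decomposition together with \cref{lem:schema_d_robust}. The only (immaterial) difference is that the paper assigns the parameter $\dx_1$ to the cancelling internal flux $F(\widetilde{w}_i^{n,-},\widetilde{w}_i^{n,+},\cdot)$ in both sub-updates and restricts the decomposition to the $h$-component (whose source vanishes), so each sub-update is a uniform-parameter half-cell scheme and the mixed-parameter check you flag is not needed.
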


\begin{proof} First, we consider a discrete steady state $(\widetilde{w}_i^n)_{i \in \Z}$. By definition, we have $\theta_i^n = 0$ for all $i \in \Z$. Hence, the scheme \eqref{eq:second-order_reconstruction}-\eqref{eq:second-order scheme WB}-\eqref{eq:def_dx1_dx2} gives
	\begin{multline}
	w_i^{n+1} = w_i^n - \frac{\dt}{\dx}\left( F \left(\widetilde{w}_i^n,\widetilde{w}_{i+1}^n,\dx \right) - F \left(\widetilde{w}_{i-1}^n,\widetilde{w}_{i}^n, \dx \right) \right) \\
	+ \frac{\dt}{2\dx} \left( S \left(\widetilde{w}_{i-1}^n,\widetilde{w}_i^n,\dx\right) + S \left(\widetilde{w}_i^n,\widetilde{w}_{i+1}^n,\dx \right) \right),
	\end{multline}
	which is nothing but the fully well-balanced first-order scheme \eqref{eq:godunov_type_scheme}.

	Now we prove the positivity-preserving property. We assume that $h_i^n $ is positive for all $i \in \Z$. 
	The update of variable $h$ with the scheme \eqref{eq:second-order_reconstruction}-\eqref{eq:second-order scheme WB}-\eqref{eq:def_dx1_dx2} writes
	\begin{align*}
		h_i^{n+1} & = h_i^n - \frac{\dt}{\dx}\left(  F^h \left(\widetilde{w}_i^{n,+},\widetilde{w}_{i+1}^{n,-},\dx_1 \right) - F^h \left(\widetilde{w}_{i-1}^{n,+},\widetilde{w}_{i}^{n,-}, \dx_1 \right) \right)\\
		& = \frac{1}{2}\left( h_i^{n,-} - \frac{\dt}{\dx/2} \left( F^h \left(\widetilde{w}_i^{n,-},\widetilde{w}_{i}^{n,+},\dx_1 \right) - F^h \left(\widetilde{w}_{i-1}^{n,+},\widetilde{w}_{i}^{n,-},\dx_1 \right) \right) \right) \\
		& \quad + \frac{1}{2} \left( h_i^{n,+} - \frac{\dt}{\dx/2} \left( F^h \left(\widetilde{w}_i^{n,+},\widetilde{w}_{i+1}^{n,-},\dx_1 \right) - F^h \left(\widetilde{w}_{i}^{n,-},\widetilde{w}_i^{n,+}, \dx_1 \right) \right) \right).
	\end{align*}
	Then $h_i^{n+1}$ is a convex combination between first-order schemes applied on half-cells with parameter $d = \dx_1$. As proved in \cref{lem:schema_d_robust}, the first-order scheme preserves the positivity of $h$ independently of the value of the parameter $d$. Therefore, we conclude $h_i^{n+1} > 0 $ for all $i \in \Z$.
%	Recalling the first-order scheme results of averages of approximate Riemann solvers, we can write $h_i^{n+1}$ as 
%	\begin{multline}
%		h_i^{n+1} =  
%		\frac{1}{2\dx} \int_0^{\frac{\dx}{2}} \WR^h\left(\frac{x}{\dt},\ww_{i-1}^{n,+},\ww_i^{n,-},\dx_1\right)dx
%		+ \frac{1}{2\dx} \int_{-\frac{\dx}{2}}^0 \WR^h\left(\frac{x}{\dt},\ww_{i}^{n,-},\ww_i^{n,+},\dx_1 \right)dx \\	
%		+\frac{1}{2\dx} \int_0^{\frac{\dx}{2}} \WR^h\left(\frac{x}{\dt},\ww_{i}^{n,-},\ww_i^{n,+},\dx_1\right)dx 
%		+ \frac{1}{2\dx} \int_{-\frac{\dx}{2}}^0 \WR^h\left(\frac{x}{\dt},\ww_{i}^{n,+},\ww_{i+1}^{n,-},\dx_1 \right)dx
%	\end{multline}
%	Since the robustness property \eqref{eq:robust_ARS_d} is valid for $d = \dx_1$ we have that $h_i^{n+1}$ is only a sum of positive terms. We conclude $h_i^{n+1} > 0 $ for all $i \in \Z$.
\end{proof}

%\begin{remark}
%	The approximate Riemann solver we built in section \ref{sec:ARS} satifies the robustness condition \eqref{eq:robust_ARS_d} since the cut-off procedure ensures $h_R^\star$ and $h_L^\star$ defined by \eqref{eq:hRstar} and \eqref{eq:hLstar} are positive, independently of $\dx$.
%\end{remark}

%\begin{remark}
%	Under suitable $CFL$ condition, the positivity of $h$ is clearly preserved by the scheme \eqref{eq:second-order_reconstruction}-\eqref{eq:second-order scheme WB}-\eqref{eq:def_dx1_dx2} when $\theta_i^n = 0$, since the first-order scheme has this property. It is also the case when $\theta_i^n = 1$ since the scheme can be seen as a convex combination of first-order schemes. However, we have not recovered this property for any $\theta_i^n \in ]0,1[$.
%\end{remark}

\section{Numerical results}\label{sec:numerical_results}

This section is devoted to numerical experiments. For the sake of simplicity, the initial discretisation will be defined as 
$$w_i^0 = w_0(x_i).$$

% We illustrate the properties of both the first-order and second-order scheme on different examples. 
Considering a continuous steady solution, the initial discretisation can satisfy exactly the \cref{def:approx at steady state} of the discrete steady states. In this case, both our first-order and second-order schemes were proved to preserve the initial condition. This will be illustrated in \cref{testcase1}.

% Some initial discretisation of continuous steady states are discrete steady states in sense of definition \ref{def:approx at steady state}. We prove earlier the scheme preserves them, and the first numerical test-case illustrates that property.
However, it is also possible that the initial discretisation of a continuous steady state does not lead to a discrete steady state according to \cref{def:approx at steady state}. The behaviour of our numerical schemes in such a case will be investigated in \cref{testcase2}.
In order to measure how close a given discretisation $(w_i^n)_{i\in \Z}$ at time $t^n$ is to a discrete steady state, we will use the steady state distance $$ \mathcal{E}^n_{\infty,j} = \max_{1 \leq i \leq N} \mathcal{E}(w_i^n,w_{i+1}^n),$$ where $j=1$ for the first-order scheme and $j=2$ for the second-order scheme.

In \cref{testcase3}, we test the long-time convergence towards a steady state on a topography with a bump, using the same distance $\mathcal{E}_{\infty,j}^n$.

Finally, in \cref{testcase4}, we consider a particular solution constant in space, but not in time, and for which we compute the errors in time.

%\subsection{Basic steady state}
%We consider here a simple test case at steady state. We compute the scheme on the domain $[0,25]$ and we set $f=g=1$. The initial data is $$h_0(x)=2, u_0(x)=2, v_0(x) = -fx.$$ Hence the topography is defined by $z(x) = -\frac{1}{2}f^2 x^2.$
%
%The initial data discretisation exactly satisfies the definition of discrete steady state \ref{def:local steady state} at each interface. We can see in the error table \ref{} that the scheme preserves the steady state. No error appears at second-order. Indeed the steady state detector $\eps_i^n$ defined in section \ref{sec:Fully well-balanced recovering} equals zero.

\subsection{Moving steady state} \label{testcase1}
We consider here a simple moving steady state.
As initial data, we take (see \cref{fig:MovingSS})
$$h_0(x) = \exp^{2 x}, \ u_0(x) = \exp^{-2 x} \text{ and } v_0(x) = -fx,$$ 
and the topography is given by
$$z(x) = -\frac{1}{2}f^2 x^2 - \exp^{2 x} - \frac 12  \exp^{-4 x}.$$
We compute this test on the domain $[0,1]$ with $N=200$ cells and the parameters $f=g=1$. 

The initial discretisation is a discrete steady state in the sense of definition \ref{def:approx at steady state}. Indeed the steady state distance at time $t_0 = 0$ is 
$$ \mathcal{E}_{\infty,1}^0 = \mathcal{E}_{\infty,2}^0 = 8.87 \times 10^{-16}.$$ At final time $T_{\max} =  0.5$, the steady state is still preserved by both first-order and second-order schemes, even if small computationnal errors have spread. Indeed, the computation of the steady state distance at the end of the simulations gives
$$ \mathcal{E}_{\infty,1}^{T_{\max}} = 5.19 \times 10^{-14} \quad\text{and}\quad \mathcal{E}_{\infty,2}^{T_{\max}} = 8.86 \times 10^{-15}.$$

\begin{figure}\label{fig:MovingSS}
	\centering
	\includegraphics[scale=0.75]{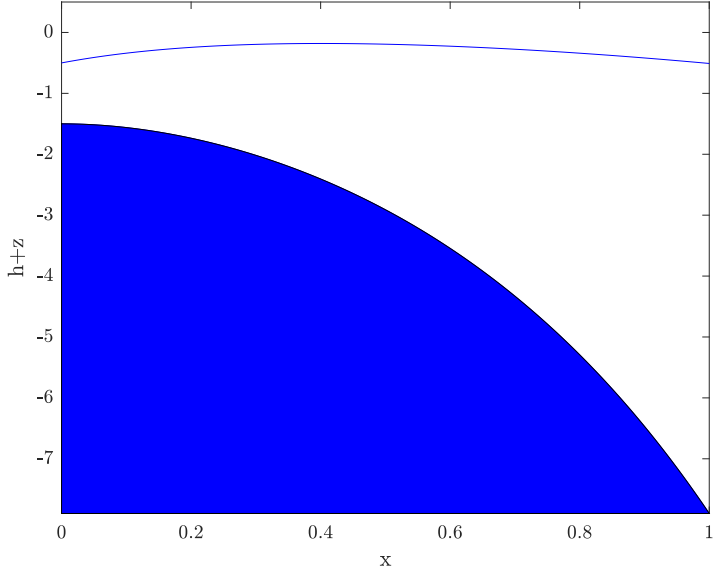}
	\caption{Initial variable h+z for the moving steady state}
\end{figure}

\subsection{Geostrophic steady state} \label{testcase2}
Next, we test the numerical schemes on another geostrophic steady state introduced in \cite{29ChertockDudzinski2018}. The computational domain is $[-5,5]$ with a flat topography ($z \equiv 0$). We set $f=10$, $g=1$, and we consider
$$h_0(x)=\frac{2}{g}-e^{-x^2}, u_0(x)=0, v_0(x)=\frac{2g}{f} xe^{-x^2},$$ 
as initial condition, which is a continuous steady state, see \cref{fig:GeosSS_Init}. The initial data discretisation is not exactly a discrete steady state, since we have for $N=200$ discretisation points $$ \mathcal{E}_{\infty,1}^0 = \mathcal{E}_{\infty,2}^0 = 4.06 \times 10^{-5}.$$
Therefore, \cref{thm:first-order_scheme,thm:second-order_scheme} does not guarantee the behaviour of the numerical schemes on this test case. However, at final time $T_{\max} = 200$ the first-order and second-order schemes  lead to the steady state distances
$$\mathcal{E}_{\infty,1}^{T_{\max}}= 1.12 \times 10^{-7} \quad \text{and}\quad \mathcal{E}_{\infty,2}^{T_{\max}} = 2.53 \times 10^{-12}.$$
Both schemes seem to converge numerically to the steady state as $t$ goes to infinty. 
Let us notice that according to \cref{sec:Fully well-balanced recovering}, the second-order scheme in space gives back the first-order scheme when the approximation is close to steady state is detected. The observed difference between the steady state distances at final time is due to the order of the time scheme. 

\begin{figure}\label{fig:GeosSS_Init}
	\centering
	\includegraphics[scale=0.75]{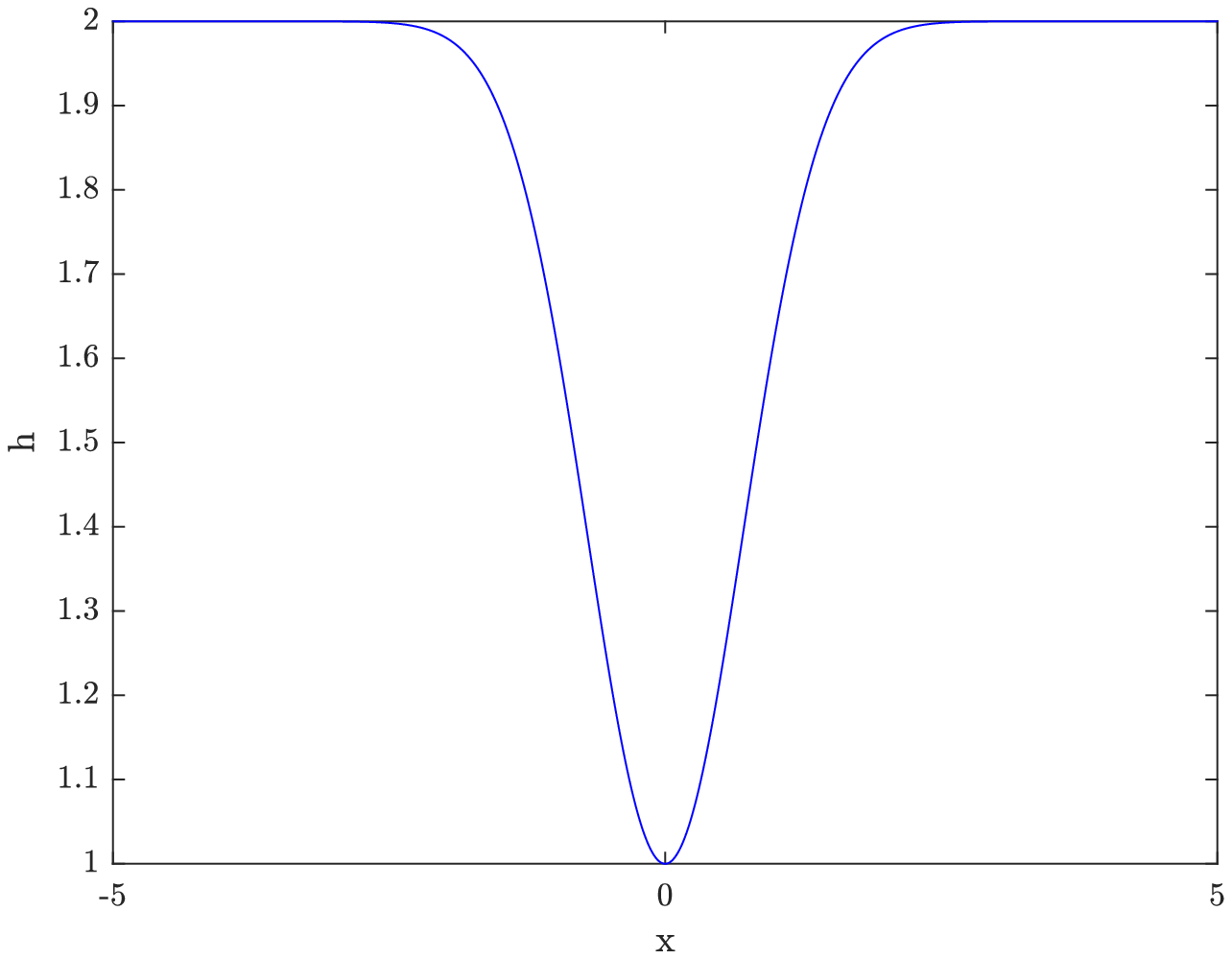}
	\includegraphics[scale=0.75]{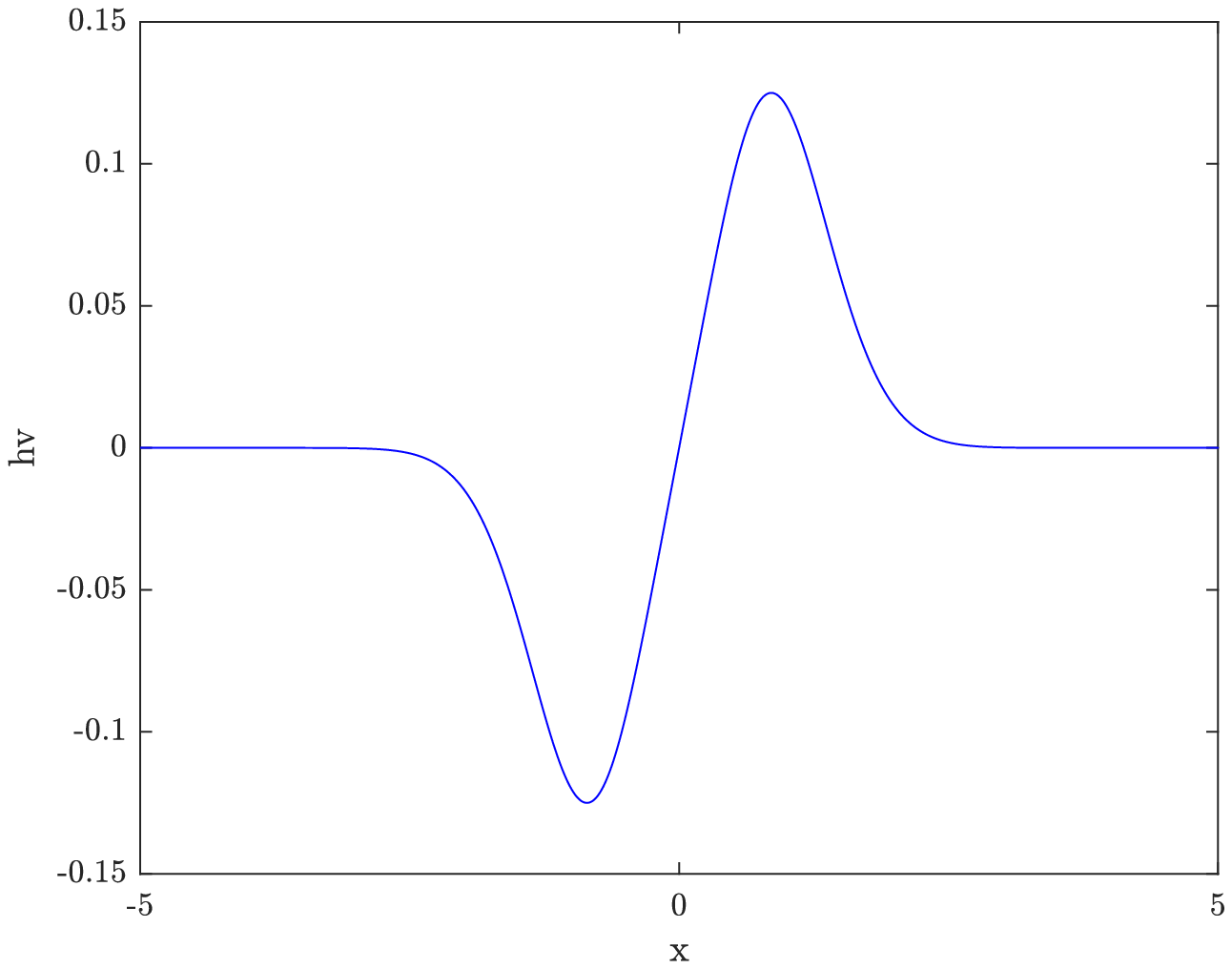}
	\caption{Initial data for the geostrophic steady state}
\end{figure}

%\begin{table}[h!]{}
%	\caption{Geostrophic steady state}
%	\centering
%	\begin{tabular}{|c|c|c|}
%		\hline 	scheme & first-order & second-order \\
%		\hline $\mathcal{E}_\infty^{T_{\max}}$	& $1.12 \times 10^{-7}$ &  $2.53 \times 10^{-12}$  \\
%		\hline
%	\end{tabular}
%\end{table}

Let us now check the convergence of both schemes when $\dx$ tends to $0$.
We define the $L_1$  discrete error in space at time $t^n $ between the exact solution $w_0$ and the numerical approximation by 
$$E^n = \dx \sum_{i=1}^N \vert w_0(x_i) - w_i^{n} \vert.$$

We present in  \cref{tab:GeoSS_error} the discrete errors for variables $h$ and $hv$ at final time $T_{\max}$ for the first-order and second-order schemes. As one can see, both schemes converge to the steady state and reach second-order accuracy in space whereas only first-order accuracy is expected for the first-order scheme.
This behaviour can be formally explained by the fact that the initial discretisation satisfies the local steady state \cref{def:local steady state} up to second-order. Indeed, a straightforward expansion shows that
% $$\begin{aligned}
% g(h_{0,i+1} - h_{0,i}) - \dx f \frac{v_{0,i}+v_{0,i+1}}{2} 
% & =  g\dx h_0'(x_i) - \dx f \frac{v_0(x_i) + v_0(x_i+\dx)}{2} + O(\dx^2) \\
% & = g \dx\left( x_i \exp^{-x_i^2} - (x_i+\dx) \exp^{-(x_i+\dx)^2} \right) + O(\dx^2).
% \end{aligned}$$
%
% By computing a Taylor expansion of function $x \mapsto x\exp^{-x^2}$ at first order, we obtain that
the initial discretisation satisfies
$$ g(h_0(x+\dx) - h_0(x)) - \dx f \frac{v_0(x)+v_0(x+\dx)}{2}
% = g \dx^2 \exp^{-x_i^2}(1-2x_i^2) + O(\dx^2)
= O(\dx^2).$$

\begin{table}{}\label{tab:GeoSS_error}
	\caption{$L^1$ error in space for the geostrophic steady state at time $T_{\max} = 200$ }
	\centering
	%	\begin{tabular}{|c|c|c|c|c|c|c|c|c|}
	%		\hline     & \multicolumn{4}{c|}{first-order scheme} & \multicolumn{4}{c|}{second-order scheme}   \\
	%		\hline
	%		\hline $N$ & \multicolumn{2}{c|}{h} & \multicolumn{2}{c|}{hv} & \multicolumn{2}{c|}{h} & \multicolumn{2}{c|}{hv} \\
	%		\hline 200  &  $4.28 \times 10^{-5}$ &       & $3.68 \times 10^{-5}$ &       \\
	%		\hline 400  &  $4.88 \times 10^{-6}$ & 2.00  &  $4.27 \times 10^{-6}$ & 1.96  \\
	%		\hline 800  &  $5.48 \times 10^{-6}$ & 1.99  &  $4.84 \times 10^{-6}$ & 1.89  \\
	%		\hline 1600 &  $6.07 \times 10^{-7}$ & 1.96  &  $5.41 \times 10^{-7}$ & 1.88  \\
	%		\hline 3200 &  $6.66 \times 10^{-7}$ & 1.94  &  $5.98 \times 10^{-7}$ & 1.90  \\
	%		\hline 6400 &  $7.24 \times 10^{-8}$ & 1.94  &  $6.56 \times 10^{-8}$ & 1.93  \\
	%		\hline
	%	\end{tabular}
	\subtable[first-order scheme]{
		\begin{tabular}{|c|c|c|c|c|}
			\hline N    &  \multicolumn{2}{c|}{h}   & \multicolumn{2}{c|}{hv} \\
			\hline 200  &  $5.25 \times 10^{-5}$ &       &  $2.11 \times 10^{-4}$ &       \\
			\hline 400  &  $1.31 \times 10^{-5}$ & 2.00  &  $5.30 \times 10^{-5}$ & 1.99  \\
			\hline 800  &  $3.30 \times 10^{-6}$ & 1.99  &  $1.38 \times 10^{-5}$ & 1.94  \\
			\hline 1600 &  $8.58 \times 10^{-7}$ & 1.94  &  $3.73 \times 10^{-6}$ & 1.88  \\
			\hline 3200 &  $2.30 \times 10^{-7}$ & 1.91  &  $1.02 \times 10^{-6}$ & 1.87  \\
			\hline 6400 &  $6.01 \times 10^{-8}$ & 1.93  &  $2.73 \times 10^{-7}$ & 1.90  \\
			\hline
	\end{tabular}}
	
	%		\subtable[second-order scheme]{
	%		\begin{tabular}{|c|c|c|}
	%			\hline N    & \multicolumn{2}{c|}{h} & \multicolumn{2}{c|}{h} \\ 
	%			\hline 200  &  $3.68 \times 10^{-5}$ &      &  &\\
	%			\hline 400  &  $4.27 \times 10^{-6}$ & 1.96 &  &\\
	%			\hline 800  &  $4.84 \times 10^{-6}$ & 1.89 &  &\\
	%			\hline 1600 &  $5.41 \times 10^{-7}$ & 1.88 &  &\\
	%			\hline 3200 &  $5.98 \times 10^{-7}$ & 1.90 &  &\\
	%			\hline 6400 &  $6.56 \times 10^{-8}$ & 1.93 &  &\\
	%			\hline
	%	\end{tabular}}
	
	\subtable[second-order scheme]{
		\begin{tabular}{|c|c|c|c|c|}
			\hline N    &  \multicolumn{2}{c|}{h}   & \multicolumn{2}{c|}{hv} \\
			\hline 200  &  $5.26 \times 10^{-5}$ &       &  $2.11 \times 10^{-4}$ &       \\
			\hline 400  &  $1.31 \times 10^{-5}$ & 2.00  &  $5.27 \times 10^{-5}$ & 2.00  \\
			\hline 800  &  $3.29 \times 10^{-6}$ & 2.00  &  $1.32 \times 10^{-5}$ & 2.00  \\
			\hline 1600 &  $8.22 \times 10^{-7}$ & 2.00  &  $3.30 \times 10^{-6}$ & 2.00  \\
			\hline 3200 &  $2.05 \times 10^{-7}$ & 2.00  &  $8.25 \times 10^{-7}$ & 2.00  \\
			\hline 6400 &  $5.14 \times 10^{-8}$ & 2.00  &  $2.06 \times 10^{-7}$ & 2.00  \\
			\hline
	\end{tabular}}
\end{table}

\subsection{Convergence towards a steady flow over a bump} \label{testcase3}
% almost like the example 6.3 in \cite{33BerthonBayeLeSeck2020} but with Coriolis forces. Voir de même [Shu Xing]

This test case aims to study the convergence towards a steady flow over a bump. It is a classical test for the shallow water equations adapted with the Coriolis source term in \cite{36bookZeitlinBouchut2007}. The topography is given by
$$z(x) = 
\begin{cases}
0.2 - 0.05(x-10)^2 & \text{ if } 8 < x < 12, \\
0 & \text{ otherwise.}
\end{cases}
$$
We consider the following initial data
$$h_0(x) = 0.33,\quad u_0(x) = 0.18/0.33 ,\quad v_0(x) = 0.$$

We compute the scheme on the domain $[0,25]$ with $N=200$ cells and we set $f = \frac{2 \pi}{50}$ and $g = 9.81$. The boundary conditions are set as
$$(hu)(x=0) = 0.18,\quad h(x=25) = 0.33,\quad v(x=0) = 0.$$ 

The numerical solution at time $T_{\max} = 200$  is represented in \cref{fig:SFOB_solution}. The time evolution of the steady state distance $\mathcal{E}_{\infty,j}^n$ is shown for both schemes in \cref{fig:SFOB_eps}. We can see these distances diminishing through time, which means both schemes actually converge towards a steady state.
%
%According to definition \eqref{eq:steadystates}, a solution is at a moving steady state if $hu$ and $\frac{u^2+v^2}{2} + g(h+z)$ are constant. In figure \ref{fig:SFOB}, we can see that the solution actually converges since this two quantities are near to constants.

\begin{figure}\label{fig:SFOB_solution}
	\centering
	\includegraphics[scale=0.65]{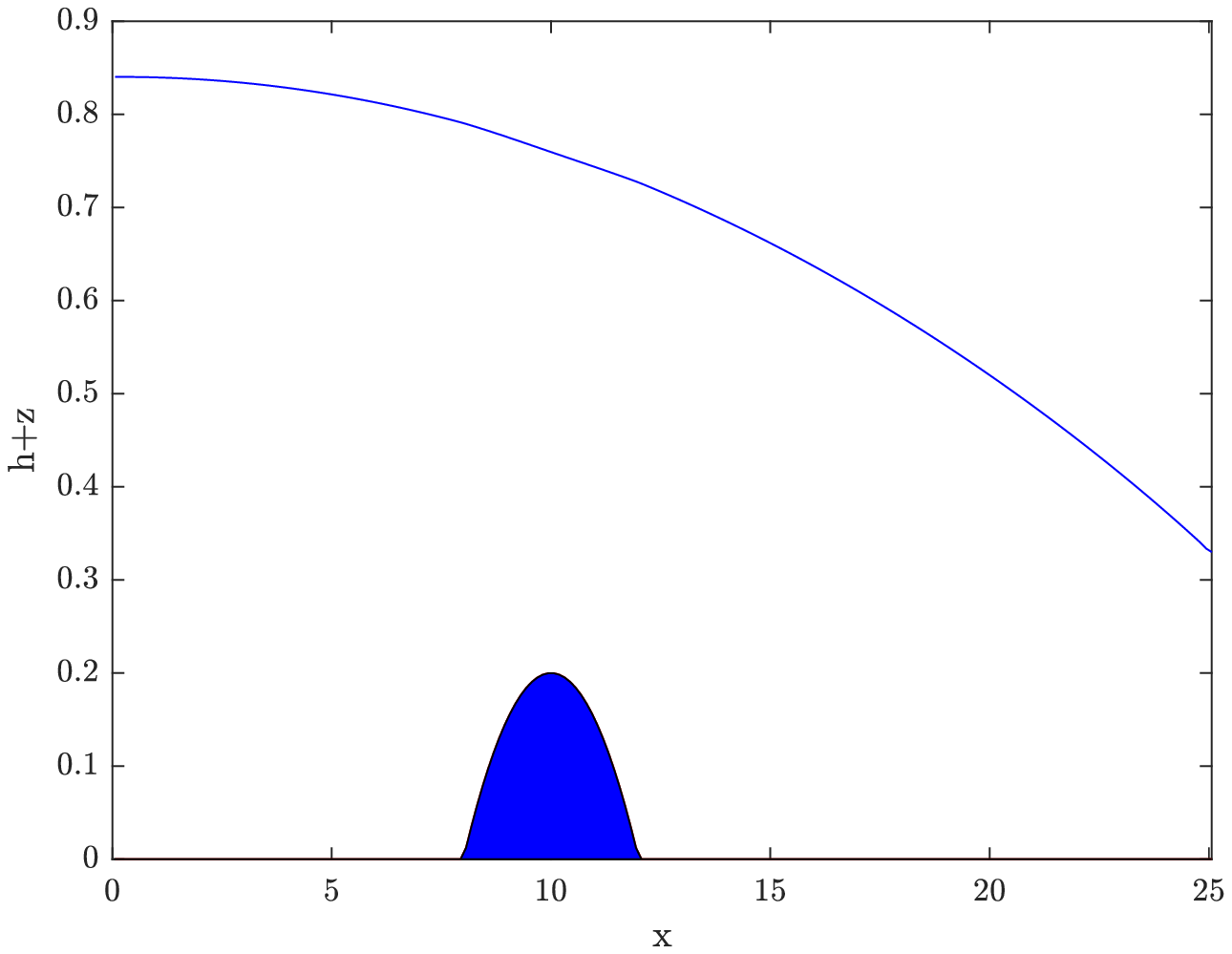}
	\includegraphics[scale=0.65]{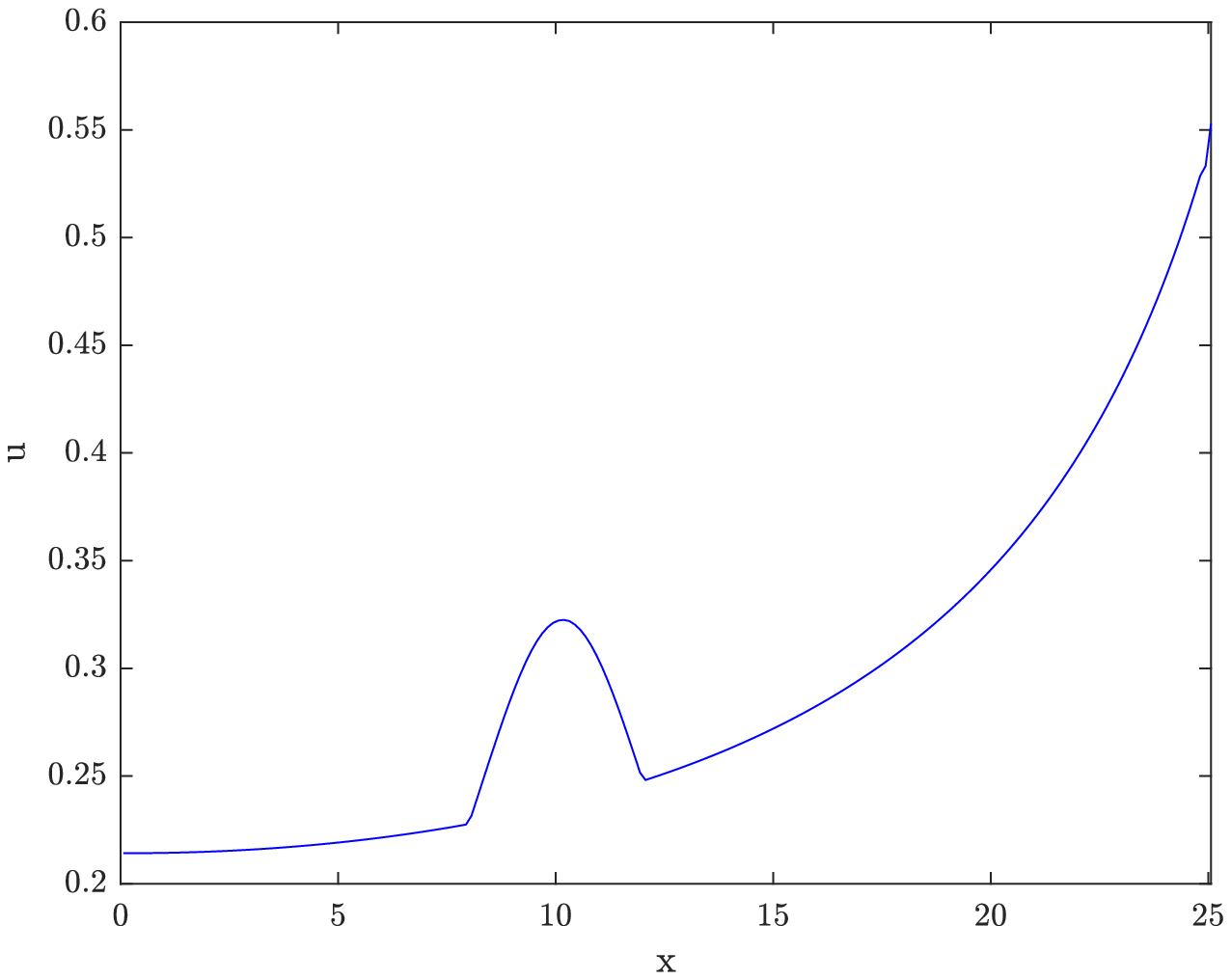}
	\includegraphics[scale=0.65]{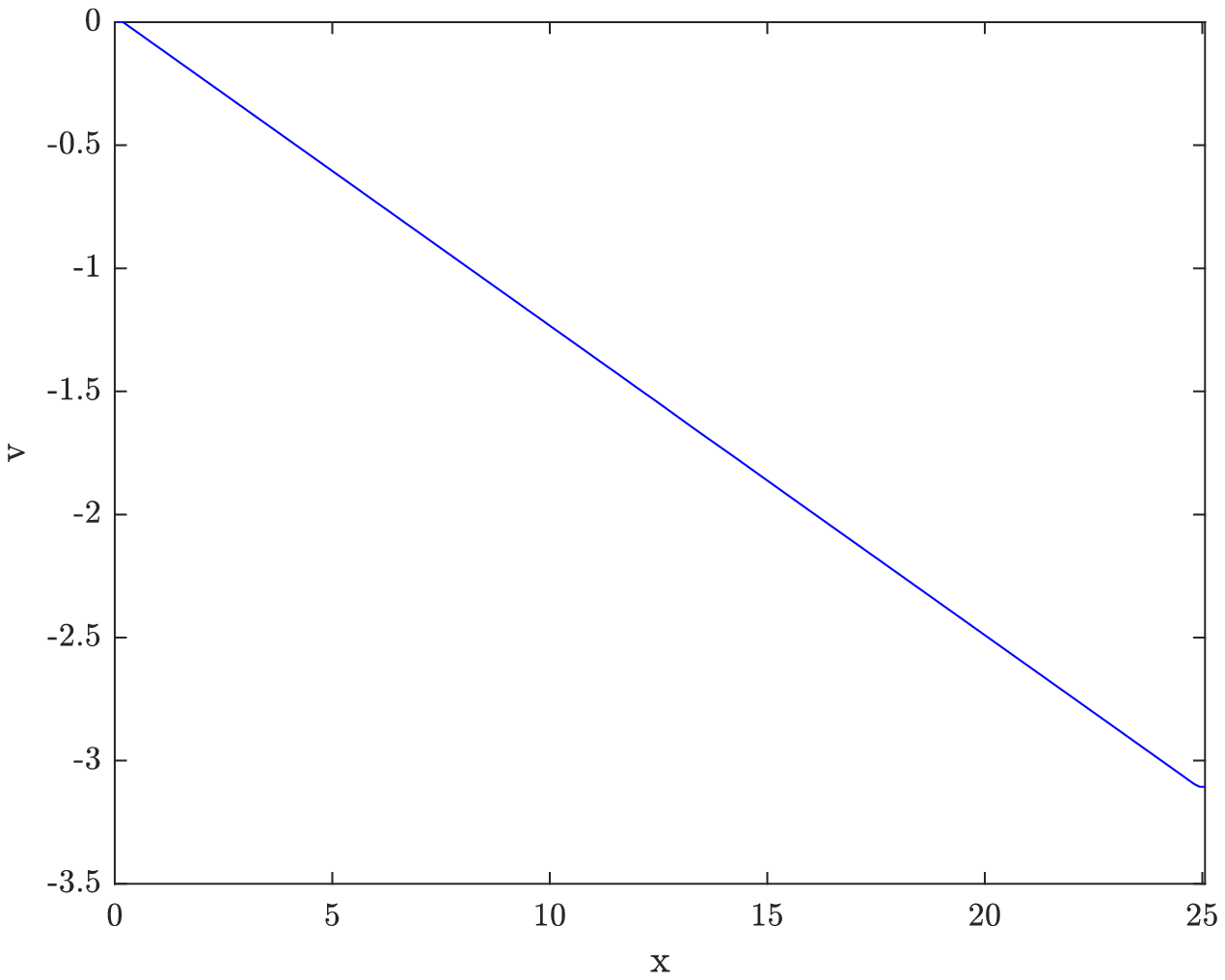}
	\caption{Approximate solution of the steady flow over a bump test case at time $T_{\max} = 200$}
\end{figure}

\begin{figure}\label{fig:SFOB_eps}
	\centering
	\includegraphics[scale=0.75]{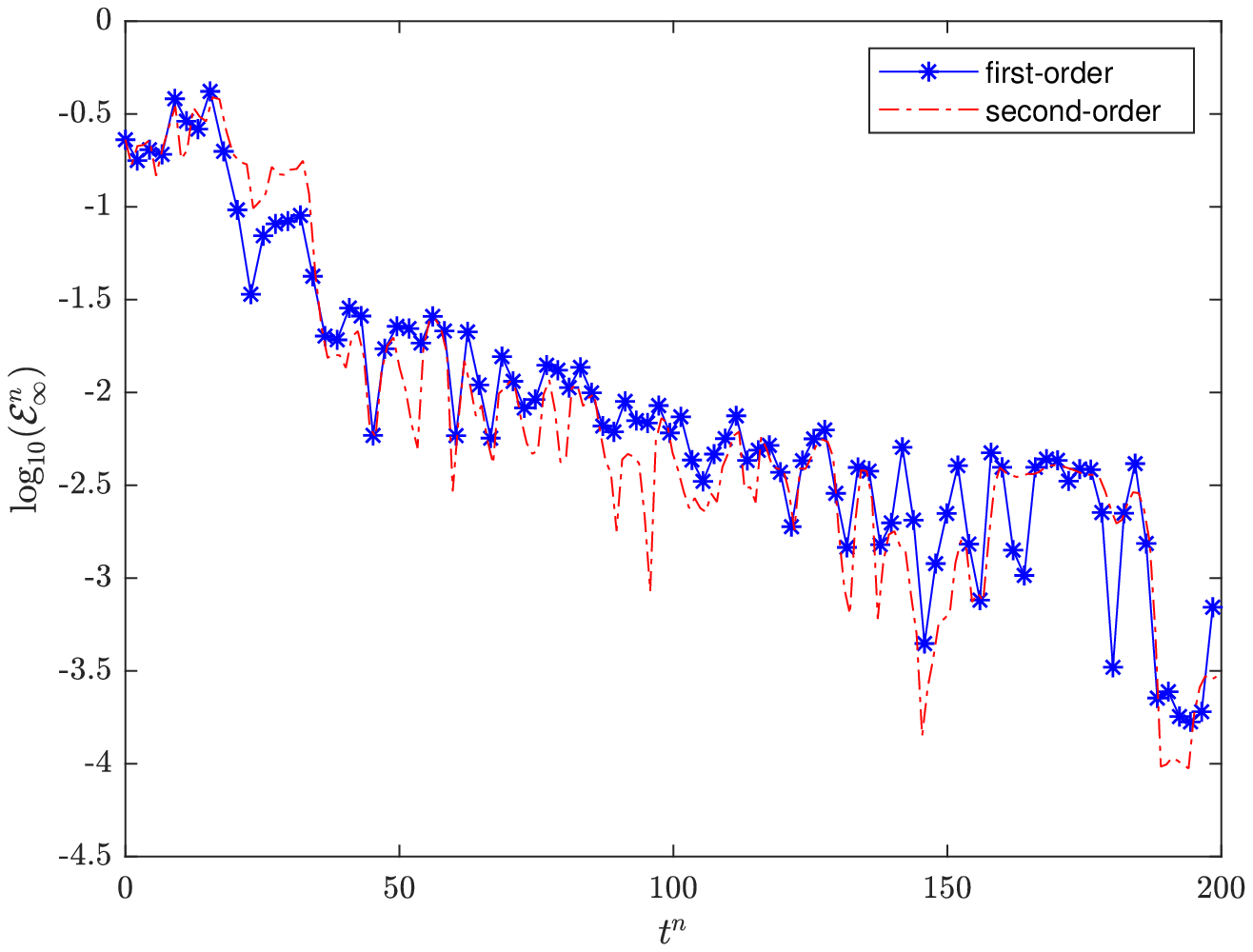}
	\caption{Steady flow over a bump, steady state distance $\mathcal{E}^n_\infty$ in logarithmic scale}
\end{figure}

\subsection{Stationary state in space} \label{testcase4}
This test case is based on a particular exact solution of the RSW equations without topography. For a constant initial condition $(h_0,u_0,v_0)$ fixed, the exact solution of RSW equations writes
$$\begin{aligned} 
& h(x,t) = h_0, \\
& u(t) = u_0 \cos(ft) + v_0 \sin(ft),\\
& v(t) = v_0 \cos(ft) - u_0 \sin(ft).
\end{aligned}$$ 
For any fixed time $t \geq 0$, the solution remains constant in space. We compute the scheme on domain $[0,1]$ until time $T_{\max} = 1$. We choose $$h_0=1,\quad u_0=1,\quad v_0 = 1$$ as initial data, with the parameters $f=g=1$ and we use periodic boundary conditions. 

The solution is well-captured by the scheme as one can see in \cref{fig:Stationary state in space}, where we represent $hu$ and $hv$ with respect to time. Since the exact solution is known and constant in space, we can check the scheme's accuracy in time. We introduce the discrete $L^1$ error in time between the exact solution $w_{ex}$ and the numerical approximation at point $x_i$
$$ E_i =  \sum_{n} (t^{n+1}-t^n) |w_{ex}(x_i,t^n)-w_i^n |. $$
Let us notice that the choice of the point $x_i$ is irrelevant. We recover the expected order of accuracy in time as one can see in error \cref{tab:CosSin_error}.

\begin{figure}\label{fig:Stationary state in space}
	\centering
	\includegraphics[scale=0.75]{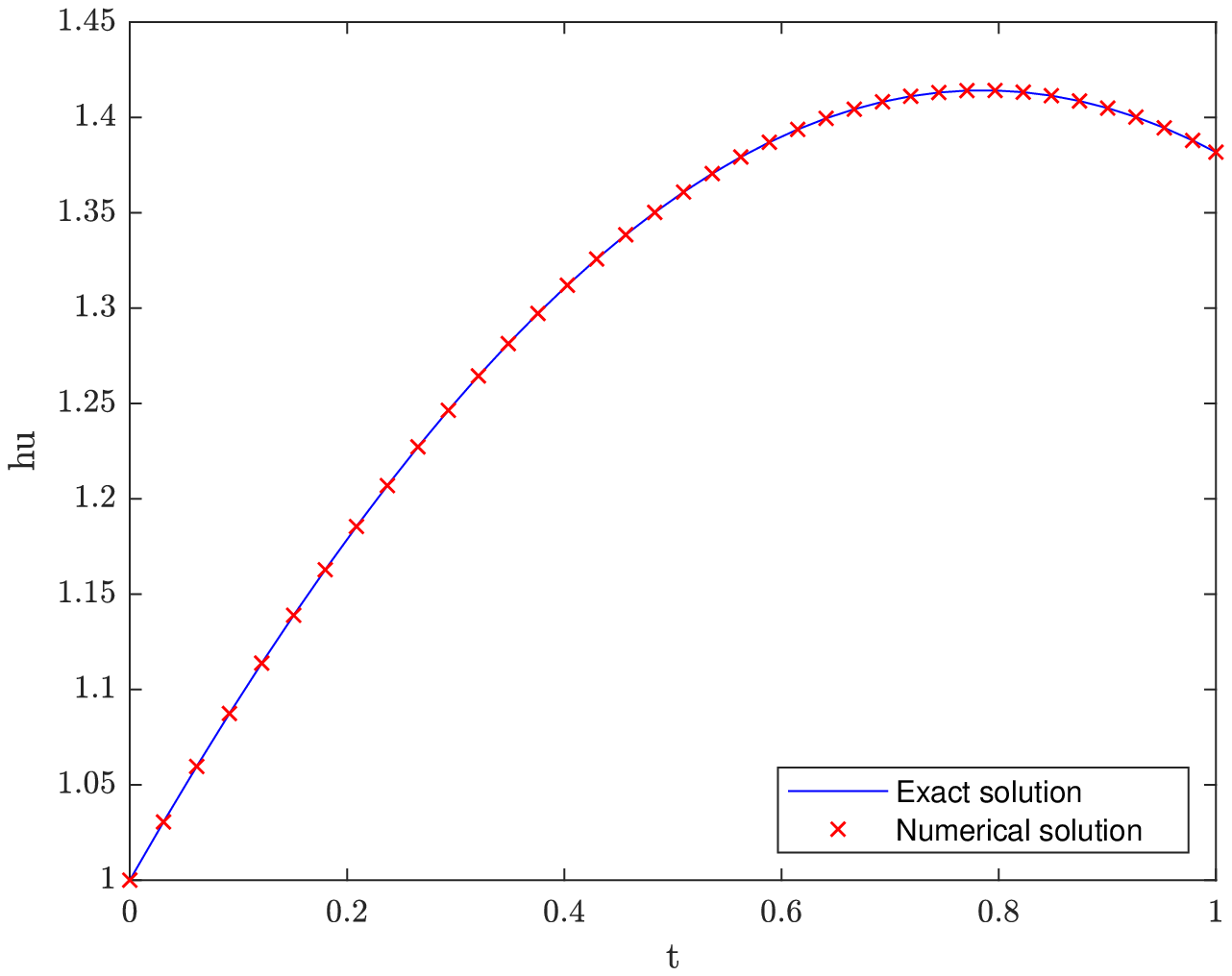}
	\includegraphics[scale=0.75]{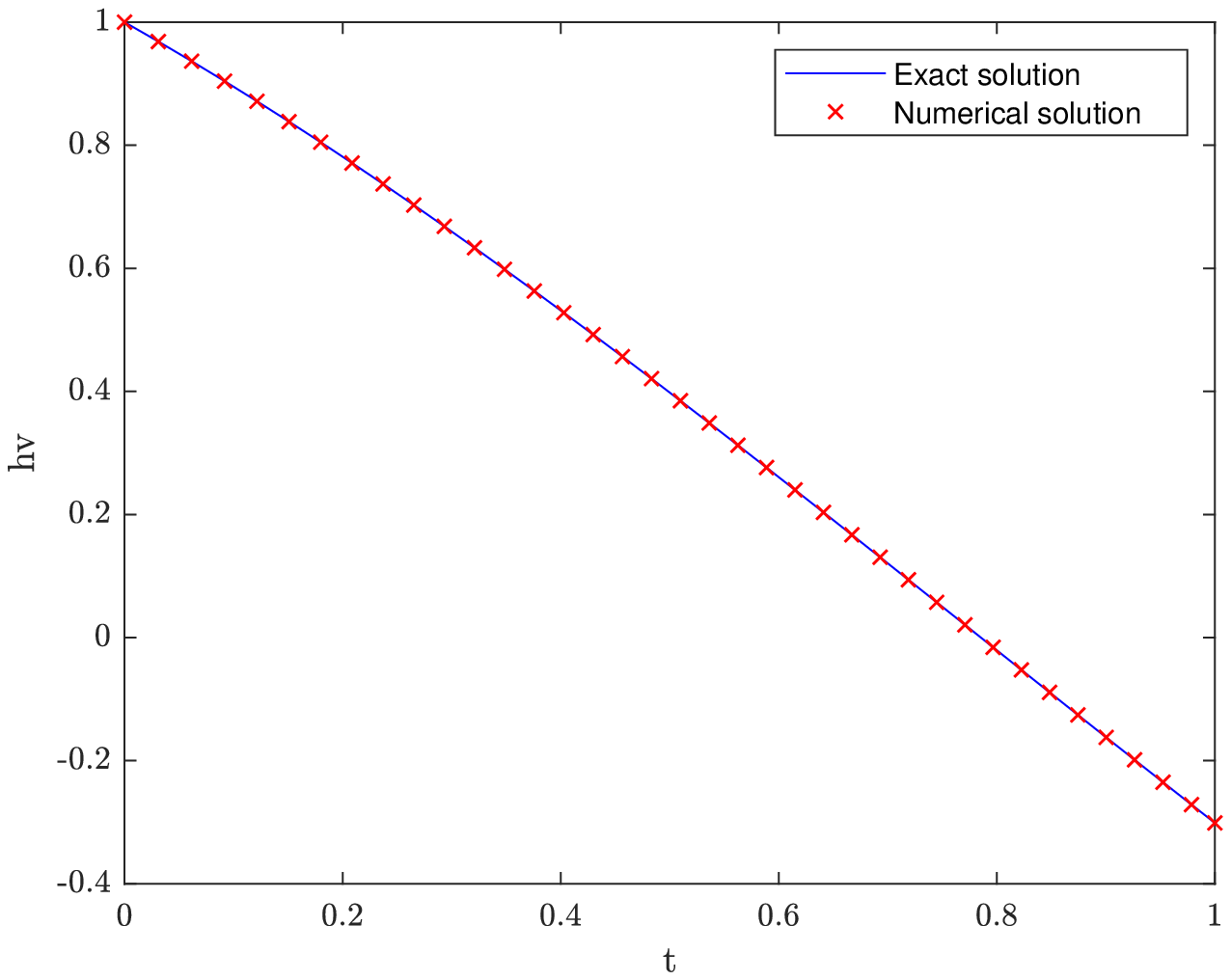}
	\caption{Stationary state in space at time $T_{\max} = 1$}
\end{figure}

\begin{table}{}\label{tab:CosSin_error}
	\centering
	\caption{$L^1$ error in time for the stationary in space test case at time $T_{\max} = 1$}
	
	\subtable[first-order scheme]{
		\begin{tabular}{|c|c|c|c|c|}
			\hline $N$    & \multicolumn{2}{c|}{hu} & \multicolumn{2}{c|}{hv}   \\
			\hline $200$  & $3.82 \times 10^{-4}$ & 0.99 & $8.06 \times 10^{-5}$  & 0.99 \\
			\hline $400$  & $1.91 \times 10^{-4}$ & 0.99 & $4.03 \times 10^{-5}$  & 0.99 \\
			\hline $800$  & $9.56 \times 10^{-5}$ & 0.99 & $2.01 \times 10^{-5}$  & 0.99 \\
			\hline $1600$ & $4.78 \times 10^{-5}$ & 0.99 & $1.01 \times 10^{-5}$ & 0.99 \\
			\hline $3200$ & $2.39 \times 10^{-5}$ & 0.99 & $5.04 \times 10^{-6}$ & 0.99 \\
			\hline $6400$ & $1.20 \times 10^{-5}$ & 0.99 & $2.52 \times 10^{-6}$ & 0.99 \\
			\hline
		\end{tabular}
	}
	
	\subtable[second-order scheme]{
		\begin{tabular}{|c|c|c|c|c|}
			\hline $N$    & \multicolumn{2}{c|}{hu} & \multicolumn{2}{c|}{hv}   \\
			\hline $200$  & $7.71 \times 10^{-9}$  & 1.99 & $3.58 \times 10^{-8}$  & 1.99 \\
			\hline $400$  & $1.92 \times 10^{-9}$  & 1.99 & $8.95 \times 10^{-9}$  & 1.99 \\
			\hline $800$  & $4.82 \times 10^{-10}$ & 2.00 & $2.24 \times 10^{-9}$  & 1.99 \\
			\hline $1600$ & $1.20 \times 10^{-10}$ & 2.00 & $5.60 \times 10^{-10}$ & 1.99 \\
			\hline $3200$ & $3.01 \times 10^{-11}$ & 2.00 & $1.40 \times 10^{-10}$ & 1.99 \\
			\hline $6400$ & $7.52 \times 10^{-12}$ & 2.00 & $3.50 \times 10^{-11}$ & 1.99 \\
			\hline
		\end{tabular}
	}
	
\end{table}

\section{Conclusions}\label{sec:conclusions}

In this work, we have built a second-order fully well-balanced scheme for the RSW system. In the first part, we have developed a fully well-balanced approximate Riemann solver by selecting carefully the numerical source term definitions and the relations used to define the intermediate states $w_L^\star$ and $w_R^\star$. The positivity of the variable $h$ has been recovered thanks to a cut-off procedure. We have proved in \cref{thm:first-order_scheme} that the resulting Godunov-type scheme satisfies all of the required features: consistency, positivity preserving and fully well-balanced property.

In the second part, we have proposed a way to extend the Godunov-type scheme to second-order. We have explained the limitations of the classical MUSCL method in view of the fully well-balanced property in the case of the RSW equations. Then we have adapted an idea proposed by \cite{46Berthon_Blood_flow,34DansacBerthonClainFoucher2016}, which consists in getting the standard MUSCL second-order scheme far from steady states and recovering the first-order fully well-balanced scheme near steady states. That procedure preserves the positivity of $h$ as proved in \cref{thm:second-order_scheme}. 

Finally, we have presented some numerical experiments to illustrate the robustness and the efficiency of both  first-order and second-order schemes.

This work can be easily extended to the two-dimensional RSW equations by involving a standard convex combination of 1D schemes by interface. 
Additionally, the Coriolis parameter has been assumed constant all along this paper.
It would be an interesting development of this work to consider a space-dependent Coriolis force, since it would be more realistic for large-scale simulations.

% \appendix
% \section{An example appendix} 

% \begin{lemma}
% Test Lemma.
% \end{lemma}

\section*{Acknowledgments}
The authors would like to thank C. Berthon and V. Michel-Dansac for the fruitful discussions.
% We would like to acknowledge the assistance of volunteers in putting
% together this example manuscript and supplement.

\bibliographystyle{siamplain}
\bibliography{bib}

\end{document}